\documentclass[11pt]{amsart}
\usepackage[T1]{fontenc}
\usepackage{lmodern}
\usepackage[margin=1in]{geometry}
\usepackage{amsmath,amssymb,amsthm,mathtools,booktabs,microtype,needspace,xcolor,mathrsfs}

\usepackage[colorlinks=true,linkcolor=blue!45!black,citecolor=blue!45!black,urlcolor=blue!45!black]{hyperref}

\hypersetup{pdftitle={Algebraically primitive Teichmuller curves in the minimal hyperelliptic stratum},pdfauthor={Myeongjae Lee}}
\newtheorem{theorem}{Theorem}[section]
\newtheorem{proposition}[theorem]{Proposition}
\newtheorem{lemma}[theorem]{Lemma}

\theoremstyle{remark}

\DeclareMathOperator{\Tr}{Tr}

\DeclareMathOperator{\diag}{diag}
\DeclareMathOperator{\Span}{span}
\DeclareMathOperator{\SL}{SL}
\newcommand{\Q}{\mathbb Q}
\newcommand{\R}{\mathbb R}
\newcommand{\C}{\mathbb C}
\newcommand{\Z}{\mathbb Z}
\newcommand{\PP}{\mathbb P}
\newcommand{\hyp}{\Omega\mathcal M_g(2g-2)^{\mathrm{hyp}}}
\newcommand{\HH}{\mathbb H}

\numberwithin{equation}{section}
\title[Teichm\"uller curves in the minimal hyperelliptic stratum]
{Algebraically primitive Teichm\"uller curves in \(\hyp\)}
\author{Myeongjae Lee}
\date{}
\keywords{Teichm\"uller curves, hyperelliptic curves, Harder--Narasimhan filtration, trace duality}
\begin{document}
\begin{abstract}
We give a complete classification of algebraically primitive Teichm\"uller curves in $\Omega\mathcal M_g(2g-2)^{\mathrm{hyp}}$ for $g\ge5$. They are precisely the curves generated by the double regular
\((2g+1)\)-gon when \(2g+1\) is prime, and by the regular
\(4g\)-gon when \(g\) is a power of two.
\end{abstract}
\maketitle
\tableofcontents

\section{Introduction}\label{sec:intro}

Let \(C\) be a Teichm\"uller curve generated by a translation
surface \((X,\omega)\). Its trace field is
\[
 K\coloneqq \Q\bigl(\{\operatorname{tr}(A):A\in\SL(X,\omega)\}\bigr),
\]
where \(\SL(X,\omega)\) is the Veech group. The field \(K\)
is totally real and has degree at most \(g\). The curve is
\emph{algebraically primitive} if \([K:\Q]=g\). In this paper, we study algebraically primitive Teichm\"uller curves in hyperelliptic minimal strata \(\hyp\) for $g>2$. In genus two, the algebraically primitive curves are completely classified by \cite{Cal04,McM03,McM05}: the irreducible components of the Weierstrass curves \(W_D\subset\mathcal{M}_2\) for non-square discriminants $D$. For prime genus $g>2$, Matheus--Wright \cite{MW} proved that there are only finitely many in \(\Omega\mathcal{M}_g(2g-2)\). In this paper, by applying the description of the cusps by M\"oller \cite{Moller}, together with the Harder--Narasimhan filtration of the Hodge bundle given by Yu--Zuo \cite{YZ}, we have the complete classification in \(\hyp\) for any $g\ge 5$.

\begin{theorem}\label{thm:classification}
Let \(g>2\). The following statements hold.
\begin{enumerate}
\item The component \(\hyp\) contains an algebraically primitive
Teichm\"uller curve if and only if \(2g+1\) is prime or \(g\)
is a power of two.
\item If \(g\ge 5 \), every such curve is one of the following:
the double regular \((2g+1)\)-gon curve, when \(2g+1\) is prime;
or the regular \(4g\)-gon curve, when \(g\) is a power of two.
Each polygon type gives exactly one curve.
\end{enumerate}
\end{theorem}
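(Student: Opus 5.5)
The plan has two halves: construct the curves in the stated cases, and show that nothing else occurs. \emph{Existence.} For $2g+1=p$ prime, the double regular $p$-gon is the hyperelliptic curve $y^2=x^{p}-1$ with $\omega=dx/y^{g}$ up to normalization. It lies in $\hyp$ because $\omega$ has a single zero of order $2g-2$ at infinity. Its Veech group is the Hecke-type triangle group of Veech, with trace field $\Q(\cos(\pi/p))$, of degree $(p-1)/2=g$. For $g=2^k$, the regular $4g$-gon with opposite sides glued is $y^2=x^{2g+1}-x$ (or $y^2=x(x^{2g}-1)$). Its trace field is $\Q(\cos(\pi/2g))$, which has degree $\varphi(4g)/2=g$ exactly when $4g$ is a power of two. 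This gives the ``if'' direction of (1) and shows the listed curves are algebraically primitive.

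\emph{Non-existence and classification.} For an algebraically primitive curve $C$ in $\hyp$, the Hodge bundle splits into $g$ rank-one eigen-line bundles $\mathcal L_\sigma$, one for each embedding $\sigma:K\to\R$. Each has degree $\deg\mathcal L_\sigma=\frac{a_\sigma}{2}\chi$, with $0<a_\sigma\le1$, $a_{\mathrm{id}}=1$, and $\chi=-\chi(C)$. In the minimal hyperelliptic stratum the Yu--Zuo Harder--Narasimhan filtration forces the sum of the Lyapunov exponents to equal $g^2/(2g-1)$. It also forces the individual values to be $a_\sigma\in\{1,\frac{2g-3}{2g-1},\dots,\frac{1}{2g-1}\}$, that is, the Kontsevich--Zorich exponents $\lambda_k=\frac{2g-2k+1}{2g-1}$. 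On the other hand, M\"oller's description of cusps shows that at each cusp the stable curve is a union of projective lines attached to the cylinder decomposition. The moduli of the cylinders lie in $K$. The Galois conjugates of $\omega$ are the other eigenforms, whose vanishing orders at the unique zero are determined by the $\lambda_k$. Using trace duality on $K$ at a cusp, I would show the Galois group acts on the eigenforms $\omega_\sigma=x^{k-1}dx/y$ (after a hyperelliptic normalization) compatibly with the cyclotomic-like structure of the degeneration. Concretely, I would show that the hyperelliptic double cover must be branched at points forming a $\mu_{2g+1}$-orbit, or $0$ together with a $\mu_{2g}$-orbit. Equivalently, I would show the curve carries an extra automorphism of order $2g+1$ or $4g$. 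That reduces $(X,\omega)$ to the two explicit families above.

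\emph{Order of steps.} First I compute the degrees of the $\mathcal L_\sigma$ from the Yu--Zuo filtration, and from this show that $\sigma\mapsto a_\sigma$ is a bijection onto the Lyapunov spectrum. Second, I analyze one cusp via M\"oller. The vanishing orders give a Galois action on the exponents $2g-2k+1$, which I identify as multiplication in $(\Z/(2g+1))^\times/\pm1$ or $(\Z/4g)^\times/\pm1$. Third, I use that this action must be transitive and free of order $g$. Transitivity forces the relevant unit group mod $\pm1$ to have order $g$, which gives $2g+1$ prime or $4g$ a power of two. This yields the ``only if'' direction of (1). Fourth, for $g\ge5$, rigidity (no deformations, since the Veech group contains the cyclic rotation) pins down the unique curve. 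Uniqueness of each curve within its polygon type follows because $\SL_2(\R)$-orbits of the two explicit surfaces are single Teichm\"uller curves.

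\emph{Main obstacle.} I expect the second step to be the hardest: showing from the degeneration data that the eigenform decomposition forces the cyclic symmetry, rather than just numerical constraints. I would first try to compare the residues at the nodes of the stable cusp curve under all Galois conjugates, and use trace duality to show they are roots of unity. The restriction $g\ge5$ presumably enters in excluding sporadic small solutions of the resulting numerical system.
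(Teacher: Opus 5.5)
\textbf{Verdict: the proposal has two genuine gaps.} Step two asserts the key arithmetic identification without any mechanism, and step four is circular.

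\textbf{Step two.} Your existence half is fine, apart from the slip $\omega=dx/y^{g}$, which is not holomorphic; the form is $dx/y$. Your guess for the arithmetic is also correct: the paper shows that $\tau_r=\sigma_g^{-1}\sigma_{g-r}\mapsto\pm(2r+1)$ is an isomorphism $\operatorname{Gal}(K/\Q)\cong(\Z/N\Z)^\times/\{\pm1\}$ with $N=2g+1$ or $N=4g$. But your second step simply asserts this, and most of the work lies there.
\begin{itemize}
\item Before speaking of a Galois action on exponents you need $K/\Q$ to be Galois. The paper gets this from trace duality: $S^{\mathsf T}D^{-1}S$ is diagonal, so the $Q_j$ with $\sigma_j(c_i)=c_iQ_j(x_i^2)$ are orthogonal polynomials for the weights $c_i^2/D_i\in K$, hence lie in $K[Y]$.
\item You then need the odd interpolation polynomials $F_r$ of degree $2r+1$ with $\tau_r(u_i)=F_r(u_i)$, and the partial law $\tau_r\tau_s=\tau_{2rs+r+s}$ for $2rs+r+s<g$.
\item You also need the normal form $F_1(z)=T_3(sz)/T_3(s)$.
\item Finally, $T_3(J)$, for the Jacobi matrix $J$ of multiplication by $2v_i^2-1$, must have a permuted path as its off-diagonal support. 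This pins $J$ to four boundary cases.
\end{itemize}
Nothing in your outline produces the modulus $N$, and the dichotomy is not automatic. One of the four cases has spectrum $\cos(k\pi/(2g-1))$ and is excluded only by a degree count. The cases $g=5,6,7$ need separate path-permutation arguments.

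Two related points. Writing the eigenforms as $x^{k-1}dx/y$ presupposes the symmetry you want to prove; at a cusp they are $Q_j(z^2)\,dz/P(z^2)$ with non-monomial $Q_j$. Also, the residues $c_i$ are totally real, so they cannot be roots of unity. What actually comes out is that the rescaled node positions $v_i$ are cosines of rational multiples of $\pi$. Note too that in the minimal stratum every cusp is irreducible: a single $\PP^1$ with node pairs $\pm x_i$.

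\textbf{Step four.} The more serious gap is here, and it is circular. That the Veech group contains a rotation is exactly what must be proved. Rigidity holds for every Teichm\"uller curve, so it distinguishes nothing. Cusp data alone cannot show that $X$ carries a cyclic automorphism of order $2g+1$ or $4g$: such a symmetry exists only on fibers over orbifold points of $C$, and your outline never produces one.

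The missing global ingredient is an Euler characteristic count. Cornalba's relation $(8g+4)\deg\det\mathcal E=g\deg\delta_{\mathrm{irr}}$ on the hyperelliptic locus, together with $\deg\det\mathcal E=\frac{g^2}{2g-1}\deg\mathcal L$ and the maximal Higgs property, gives
\[
-\chi_{\mathrm{orb}}(C)=\frac{2g-1}{g(4g+2)}\sum_{j=1}^{c}\frac{\sum_i m_{ji}}{k_j}.
\]
The cusp analysis yields moduli vectors $(1,\dots,1)$ or $(2,1,\dots,1)$, so $-\chi_{\mathrm{orb}}(C)<c/2$. This forces $C$ to be a $(2,n,\infty)$ or $(n,\infty,\infty)$ triangle orbifold, which gives an elliptic element $E$ with $\Q(\operatorname{tr}E)=K$. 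Only then is there a holomorphic automorphism $f$ with $f^*\omega=\mu\omega$. Counting orbits of $\bar f$ on the branch points gives $n=2g+1$ prime or $n=2g$ a power of two, i.e.\ the models $y^2=x^{2g+1}-1$ and $y^2=x(x^{2g}-1)$.
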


\subsection{Strategy of proof}
\begin{itemize}
    \item In Section~\ref{sec:intro}, we describe the normalized irreducible cusps of the algebraically primitive Teichm\"uller curves.
    \item In Section~\ref{sec:HN-duality}, we deduce the consequences of Harder--Narasimhan filtration and trace duality. In particular, we prove $K/\Q$ is Galois.
    \item In Section~\ref{sec:galois}, we study the group structure of $\operatorname{Gal}(K/\Q)$ to prove the first statement in Theorem~\ref{thm:classification}: $p=2g+1$ is prime or $g$ is a power of two. Also we prove Theorem~\ref{thm:weights} for $g\ge8$. 
    \item In Section~\ref{sec:elliptic}, we find an elliptic element in the Veech group of algebraically primitive Teichm\"uller curves and prove the second statement of Theorem~\ref{thm:classification}. 
    \item In Section~\ref{sec:small}, we take care of small genera $g=5,6,7$, completing the proof of Theorem~\ref{thm:weights}. 
\end{itemize}

\paragraph{\bf AI usage disclosure}
The author used ChatGPT-6 to assist in developing proof ideas in Sections~\ref{sec:galois}--\ref{sec:small}, as part of an effort to generalize the results of \cite{Lee} to other strata including \(\hyp\).
The author verified the core mathematical  arguments and wrote the proofs.

\subsection{Cusps of Teichm\"uller curves}

Let \(C\subset \Omega\mathcal{M}_g(2g-2)^{\mathrm{hyp}}\) be an
algebraically primitive Teichm\"uller curve, with totally real trace field
\(K\) and \([K:\Q]=g\). Since there is only one zero, all the cusps of $C$ are irreducible. As in \cite[Section~6.1]{BHM}, we have the following description of the cusps of $C$.

\begin{proposition}\label{prop:MM}
Consider an irreducible horizontal cusp whose
normalization is a meromorphic \(1\)-form \(\omega_h^{\mathrm{norm}}\)
on \(\mathbb{P}^1_z\), written in two ways:
\[
\omega_h^{\mathrm{norm}}
=\sum_{i=1}^g c_i\Bigl(\frac{1}{z-x_i}-\frac{1}{z+x_i}\Bigr)\,dz
=\frac{R}{\prod_{i=1}^g (z^2-x^2_i)}\,dz,\]
where $R\in \R$ and the $2g$ node preimages $\pm x_i$ with $x_1=1$. By swapping $\pm x_i$ if necessary, we may assume $c_i>0$ and normalize to have $c_1=1$. The order $2g-2$ zero of \(\omega_h^{\mathrm{norm}}\) is at \(z=\infty\). Then
\begin{itemize}
    \item The set \(\{c_i\}\) forms a \(\Q\)-basis of \(K\).
    \item Let $(h_i)$ be the trace-dual basis of $(c_i)$. Then
    \[
       \frac{h_i c_k}{c_i h_k}\in\Q_{>0}\qquad1\le i,k\le g.
    \]
    For a generating differential $(X,\omega)$ near the cusp, the cylinder circumferences and heights are common positive real multiples of $(c_i)$ and $(h_i)$, respectively.
\end{itemize}
\end{proposition}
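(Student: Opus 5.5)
Our plan is to combine the standard degeneration picture of a Teichm\"uller curve at a cusp with the hyperelliptic involution. \textbf{Step 1: the stable limit.} Fix a periodic horizontal direction of the generating surface $(X,\omega)$. By Veech's dichotomy it decomposes into cylinders. Because $[K:\Q]=g$, the circumferences span a $g$-dimensional $\Q$-space, so there are at least $g$ cylinders. There are also at most $g$ of them, since a surface in $\hyp$ has at most $g$ horizontal cylinders. Hence there are exactly $g$ cylinders with $\Q$-independent circumferences $w_i$. Pinching the core curves gives the stable limit. Since there is a single zero, it is an irreducible rational curve with $g$ nodes (geometric genus $g-g=0$), as in \cite[Section~6.1]{BHM}. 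The limiting form $\omega_h^{\mathrm{norm}}$ has simple poles at the $2g$ node preimages, with residues $\pm w_i/(2\pi i)$ at the two branches. It has a zero of order $2g-2$ at one point, which we place at $\infty$.

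\textbf{Step 2: the normal form.} The hyperelliptic involution preserves each horizontal cylinder and maps its core curve to itself with reversed orientation. Hence it extends to the stable limit as an involution of $\mathbb{P}^1$ that fixes $\infty$, pulls $\omega^{\mathrm{norm}}$ back to $-\omega^{\mathrm{norm}}$, and swaps the two branches at each node. After an affine change of coordinate it becomes $z\mapsto -z$, so the node pairs are $\{x_i,-x_i\}$. Scaling $z$ gives $x_1=1$, and scaling $\omega$ gives $c_1=1$. Writing $c_i\propto w_i$ with $c_i>0$ (swapping $\pm x_i$ as needed) yields the partial fraction form. Combining the fractions gives numerator $2\sum_i c_ix_i\prod_{j\ne i}(z^2-x_j^2)$, which has degree at most $2g-2$. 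The requirement of a zero of order exactly $2g-2$ at $\infty$ forces every coefficient except the constant to vanish, so the form is $R/\prod(z^2-x_i^2)\,dz$.

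\textbf{Step 3: arithmetic of the $c_i$ and $h_i$.} The ratios of circumferences lie in $K$; this is a standard fact, since the circumferences and holonomies of saddle connections can be normalized to lie in $K$ (Kenyon--Smillie, McMullen). Together with their independence, this makes $\{c_i\}$ a $\Q$-basis of $K$. For the heights $h'_i$, the parabolic element $T$ fixing this cusp acts on cylinder $i$ as a power of the Dehn twist, so the moduli $m_i=h'_i/w_i$ have rational ratios. Thus $h'_i/w_i=q_i\,\mu$ with $q_i\in\Q_{>0}$. To identify $h'$ with the trace-dual basis up to a common real factor, I would use the orthogonality relation of \cite{McM03} or M\"oller \cite{Moller}, obtained from the Galois conjugates of $\omega$ being eigenforms for real multiplication. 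It states $\sum_i w_i^{\sigma}h'_i=0$ for every nontrivial embedding $\sigma$ of $K$, which follows from the vanishing of the area pairing between $\omega$ and $\omega^{\sigma}$. This condition characterizes $h'$ as a real multiple of the trace-dual basis $(h_i)$ of $(c_i)$. Then $h_ic_k/(c_ih_k)=m_i/m_k\in\Q_{>0}$, and the positivity of the $h_i$ follows from that of the heights.

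\textbf{Main obstacle.} The main obstacle is this orthogonality/trace-duality step. First, the relation must be stated carefully: it is the symplectic pairing of the eigenform $\omega^{\sigma}$ with $\bar\omega$ (equivalently Bainbridge's relation), expressed in cylinder coordinates via $\mathrm{Re}$ and $\mathrm{Im}$ periods. Second, it must be checked that it amounts to exactly $\sum_i c_i^{\sigma}h_i=0$ rather than an identity involving twists. The twists drop out because the horizontal core curves form an isotropic basis.
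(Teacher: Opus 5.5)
The paper gives no argument of its own for this proposition; it imports the whole description from \cite[Section~6.1]{BHM} and M\"oller. Your outline is the standard argument behind that citation. Step~2 is fine, and so is the closing linear algebra: the vectors $(\sigma(c_i))_i$ for $\sigma\ne\mathrm{id}$ are independent, so their orthogonal complement is the line spanned by the trace-dual basis.

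The load-bearing gap is in Step~1. The sentence ``because $[K:\Q]=g$, the circumferences span a $g$-dimensional $\Q$-space'' is not an argument. Knowing that circumference ratios lie in $K$ only puts their span inside $K$. Nothing you have said excludes fewer than $g$ homologically independent cylinders, i.e.\ a stable limit of positive geometric genus, in which case the $c_i$ would not be a basis. The missing input is real multiplication. For an algebraically primitive curve, the Galois conjugates of the classes $\operatorname{Re}\omega$ and $\operatorname{Im}\omega$ span $H^1(X,\R)$, so the normalized period map $H_1(X,\Q)\to K\oplus iK$ is an isomorphism. Given this, the horizontal multitwist $T$ has parabolic derivative with upper-right entry $t\in K^\times$, and $\operatorname{per}\circ(T-I)=(DT-I)\circ\operatorname{per}$. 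Hence $(T-I)H_1(X,\Q)$ is the entire $g$-dimensional space of classes with real period. The Dehn twist formula $(T-I)x=\sum_i m_i\langle\gamma_i,x\rangle\gamma_i$ places this image inside the span of the core curves. So there are $g$ homologically independent cylinders, and their circumferences span $K$. You also use the same $K$-linearity silently in Step~3, when you write $\int_{\gamma_j}\omega^\sigma=z_\sigma\,\sigma(c_j)$.

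Step~3 also has to be set up differently. The vanishing of $\int\omega^\sigma\wedge\bar\omega$ alone does not give your relation. Take a symplectic basis $\{\gamma_j,b_j\}$ completing the core curves, normalized so that $\int_{\gamma_j}\omega=c_j$. The vanishing then reads $z_\sigma\sum_j\sigma(c_j)\,\overline{\int_{b_j}\omega}=\sum_j c_j\int_{b_j}\omega^\sigma$. The right-hand side involves the $b$-periods of $\omega^\sigma$, which you do not control. You must combine this with $\int\omega^\sigma\wedge\omega=0$, which holds for type reasons; equivalently, pair $\omega^\sigma$ with $\operatorname{Im}\omega$. Subtracting gives $z_\sigma\sum_j\sigma(c_j)\operatorname{Im}\int_{b_j}\omega=0$, and $\operatorname{Im}\int_{b_j}\omega$ is the height $h'_j$ for $b_j$ dual to $\gamma_j$. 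Moreover $z_\sigma\ne0$, because a nonzero holomorphic form cannot have all its periods vanish on a Lagrangian (Riemann's bilinear inequality). Alternatively, pair the conjugated real class $\gamma\mapsto\sigma(\operatorname{Re}\int_\gamma\omega)$, which lies in the $\sigma$-eigenspace, with $\operatorname{Im}\omega$. In either form, the extra terms die because the core curves are horizontal; their isotropy only supplies the symplectic basis. Finally, positivity of the heights only shows that all $h_i$ share a sign. The sign is positive because $\sum_i c_ih_i=1$, the identity-embedding entry of trace duality, and $c_i>0$.
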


Furthermore, we set
\[
  P(Y)=\prod_{i=1}^g(Y-x^2_i).
\] The notation introduced above will be used throughout this paper.

Then we have the relation
\[
    \omega_h^{\mathrm{norm}}=\frac{2P'(1) dz}{P(z^2)}
\] and
\begin{equation*}
    c_i=\operatorname{res}_{x_i}\omega_h^{\mathrm{norm}} = \frac{P'(1)}{x_iP'(x^2_i)}.
\end{equation*}

The zero of order $2g-2$ at $\infty$ also gives

\[
\sum_{i=1}^g c_ix_i^{2r-1}=0,\qquad 1\le r\le g-1.
\]
Indeed, expanding the partial fractions at $z=\infty$ gives
\[
 \frac{\omega_h^{\mathrm{norm}}}{dz}
 =2\sum_{r\ge1}\left(\sum_{i=1}^g c_ix_i^{2r-1}\right)z^{-2r},
\]
and its coefficients of $z^{-2},\ldots,z^{-2g+2}$ vanish.

\section{Consequences of Harder--Narasimhan filtration and trace duality}\label{sec:HN-duality}

\subsection{Harder--Narasimhan filtration}
Let $f:\mathcal S\to\overline C$ be the semistable family of curves
(after a finite base change and compactification), and let $D$ denote the
section defined by the unique zero.  The extended Hodge bundle
\[
    \mathcal{E}=f_*\omega_{\mathcal S/\overline C}
\]
splits into eigenline bundles $\mathcal L_\sigma$ indexed by the real
embeddings $\sigma:K\hookrightarrow\R$.  The tautological line bundle is
denoted by $\mathcal L=\mathcal L_{\sigma_1}$, where $\sigma_1=1$.
Yu--Zuo identify the relevant Weierstrass filtration with the
Harder--Narasimhan filtration of $\mathcal{E}$
\cite[Proposition~5.5 and Theorem~5.6]{YZ}; the direct-sum principle in
\cite[Lemma~4.4]{BHM} identifies its steps with eigenlines.

\begin{proposition}
\label{prop:HN-minimal}
There is an ordering $\sigma_1=1,\sigma_2,\ldots,\sigma_g$
of the real embeddings of $K$ such that
\[
 0\subset V_1\subset\cdots\subset V_g=\mathcal{E},
 \qquad
 V_j=\bigoplus_{i=1}^j\mathcal L_{\sigma_i},\qquad
 \frac{\deg(V_j/V_{j-1})}{\deg\mathcal L}=1-\frac{2(j-1)}{2g-1}.
\]

Moreover,
\[
    V_j
    =
    f_*\bigl(
      \omega_{\mathcal S/\overline C}(-(2g-2j)D)
    \bigr).
\]
Thus $V_j$ consists of differentials vanishing to order at least
$2g-2j$ at the marked zero. A nonzero generator of
$\mathcal L_{\sigma_j}$ has exact order $2g-2j$ at the marked zero.
\end{proposition}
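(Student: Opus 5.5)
We will derive the statement from the Weierstrass filtration of Yu--Zuo together with the eigenline decomposition. Set
\[
W_m\coloneqq f_*\bigl(\omega_{\mathcal S/\overline C}(-mD)\bigr),\qquad 0\le m\le 2g-2 .
\]
This is a saturated subsheaf of $\mathcal E$. On the hyperelliptic fibre, with the Weierstrass point $D$ as the unique zero, the differentials are $p(x)\,dx/y$ with $\deg p\le g-1$. In the local parameter $t=1/\sqrt{x}$ at $D$, such a differential vanishes to an \emph{even} order $2(g-1-\deg p)$. Hence the vanishing orders realized on a general fibre are exactly $0,2,\dots,2g-2$, and $W_{2k-1}=W_{2k}$. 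So the distinct steps are the bundles
\[
U_j\coloneqq W_{2g-2j},\qquad \operatorname{rk}U_j=j .
\]
By \cite[Proposition~5.5 and Theorem~5.6]{YZ} these $U_j$ form the Harder--Narasimhan filtration of $\mathcal E$. The successive quotient $U_j/U_{j-1}$ has slope $\frac{2g-1-2(j-1)}{2g-1}\deg\mathcal L$. This comes from the Yu--Zuo computation: the quotient injects into $\mathcal L^{\otimes}$-twisted jets of $D$, and the relations $\omega_{\mathcal S/\overline C}|_D$, $D^2$ express everything in terms of $\deg\mathcal L$ with the factor $\frac{1}{2g-1}=\frac{1}{\kappa+1}$. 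I will quote this slope formula from \cite{YZ}, specialized to $\kappa=2g-2$ and even orders only. Since the slopes are strictly decreasing, the filtration is indeed Harder--Narasimhan. The top piece $U_1$ is $\mathcal L$, because the generating form has its full-order zero along $D$.

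Next I identify the steps with eigenlines. Real multiplication by $K$ splits $\mathcal E=\bigoplus_\sigma\mathcal L_\sigma$, and the Harder--Narasimhan filtration is canonical. I will apply the direct-sum principle \cite[Lemma~4.4]{BHM}, which says that each $U_j$ is a direct sum of eigenlines. Each $\mathcal L_\sigma$ is semistable of rank one, and the slopes of the graded pieces are pairwise distinct. Consequently each $U_j/U_{j-1}$ is isomorphic to exactly one eigenline. This defines the ordering $\sigma_j$, with $\sigma_1=1$ because $U_1=\mathcal L$. Then $V_j=\bigoplus_{i\le j}\mathcal L_{\sigma_i}=U_j$, and
\[
\deg(V_j/V_{j-1})=\deg\mathcal L_{\sigma_j}=\Bigl(1-\frac{2(j-1)}{2g-1}\Bigr)\deg\mathcal L .
\]

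Finally, take a nonzero generator of $\mathcal L_{\sigma_j}$. It lies in $V_j=W_{2g-2j}$, so it vanishes to order at least $2g-2j$ along $D$. It does not lie in $V_{j-1}=W_{2g-2j+2}$, and since odd orders cannot occur it has exact order $2g-2j$ on a general fibre.

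The main obstacle is the slope computation. It requires checking that the Yu--Zuo argument for the non-hyperelliptic Weierstrass filtration adapts when the vanishing orders are restricted to even values. Concretely, one must verify that the quotients $W_{2k}/W_{2k+2}$ are line bundles and compute their degrees via $D^2=-\deg\mathcal L/(2g-1)$ and $\omega|_D=-D^2$ type relations. I would first try to cite \cite{YZ} directly for the hyperelliptic component; they treat $\hyp$ explicitly.
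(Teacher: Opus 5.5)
Your identification of the filtration follows the paper. The vanishing sequence $0,2,\ldots,2g-2$ at the Weierstrass point gives the ranks. Yu--Zuo (Proposition~5.5 and Theorem~5.6) give the slopes, and they already treat $\hyp$, so the ``main obstacle'' you flag is just a citation, exactly as in the paper. The direct-sum principle together with the distinct slopes produces the ordering $\sigma_1=1,\sigma_2,\ldots,\sigma_g$. That part is fine.

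What is missing is the behaviour at the cusp fibres. Your last paragraph shows only that a generator of $\mathcal L_{\sigma_j}$ has exact order $2g-2j$ on a general (smooth) fibre. Your parity argument (``odd orders cannot occur'') is likewise justified only for smooth hyperelliptic fibres. The paper needs, and proves, the statement on the singular fibre over a cusp, because that is where Proposition~\ref{prop:deg} applies it: the eigenforms $\omega_h^j$ live on the normalized cusp. There you must know that the fibre of $V_j$ is the full space of stable differentials vanishing to order $\ge 2g-2j$ at the marked point.

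In general the map $V_j\otimes k(b)\hookrightarrow H^0\bigl(\omega_{\mathcal S_b}(-(2g-2j)D_b)\bigr)$ is only injective. If $h^0$ jumped at the cusp, a vector in the fibre of $\mathcal L_{\sigma_j}$ could vanish to higher order there without lying in the fibre of $V_{j-1}$, and the exact-order claim would fail.

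The paper closes this as follows; you should add the same computation.
\begin{itemize}
\item Write every stable differential on the irreducible cusp as $Q(z^2)\,dz/P(z^2)$ with $\deg Q\le g-1$.
\item Such a form has order $2g-2-2\deg Q$ at $z=\infty$, which is always even. So the subspace of forms of order $\ge 2g-2j$ is exactly $\{\deg Q\le j-1\}$, of dimension $j=\operatorname{rk}V_j$.
\item Cohomology and base change then identifies this subspace with the fibre of $V_j$ at the cusp.
\item The splitting $V_j=V_{j-1}\oplus\mathcal L_{\sigma_j}$ now forces the generator of $\mathcal L_{\sigma_j}$ there to have $\deg Q_j=j-1$ exactly, i.e.\ exact order $2g-2j$.
\end{itemize}
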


\begin{proof}
The unique zero is a Weierstrass point of the hyperelliptic curve.
The canonical vanishing sequence at a Weierstrass point is
\[
              0,2,4,\ldots,2g-2.
\]
Consequently
\[
 V_j
 =
 f_*\bigl(
   \omega_{\mathcal S/\overline C}(-(2g-2j)D)
 \bigr)
\]
has rank $j$. Its successive normalized slopes are given by Yu--Zuo
\cite[Proposition~5.5 and Theorem~5.6]{YZ}, and they are strictly decreasing. Since the Hodge bundle is a direct sum of
eigenlines, the direct-sum principle
\cite[Lemma~4.4]{BHM}, together with uniqueness of the
Harder--Narasimhan filtration, gives
\[
          V_j=\bigoplus_{i=1}^j\mathcal L_{\sigma_i}.
\]

The stable differentials compatible with hyperelliptic involution are
\[
               Q(z^2)\frac{dz}{P(z^2)},
               \qquad \deg Q\le g-1.
\]
If $\deg Q=m$, then at infinity
\[
       Q(z^2)\frac{dz}{P(z^2)}
          \sim z^{2m-2g}\,dz,
\]
and hence the sections vanishing to order at least $2g-2j$ are exactly
\[
      Q_{j}(z^2)\frac{dz}{P(z^2)},
      \qquad
      Q_j\in\C[Y],
      \qquad
      \deg Q_j\le j-1.
\]
This space has dimension \(j\), as on the smooth fibers.
Cohomology and base change therefore identifies it with
the fiber of \(V_j\) at the cusp. Finally, a nonzero vector in the fiber of
$\mathcal L_{\sigma_j}$ lies in $V_j$ but not in $V_{j-1}$.
Hence $Q_j$ has degree exactly $j-1$.
\end{proof}

The first consequences of the Harder--Narasimhan filtration are the following

\begin{proposition}\label{prop:deg}
The following statements hold:

\begin{enumerate}
    \item $\sum_{i=1}^g\sigma_j(c_i)x_i^{2r+1}=0,
 \qquad 0\le r\le g-j-1.$
    \item
For every $j$ there is a unique polynomial $Q_j\in\C[Y]$ of degree
exactly $j-1$ such that
\[
                    Q_j(x^2_i)=\frac{\sigma_j(c_i)}{c_i}.
\]
\end{enumerate}

\end{proposition}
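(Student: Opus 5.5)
The plan is to identify the fiber of the eigenline \(\mathcal L_{\sigma_j}\) at the cusp explicitly, and then read off both statements from Proposition~\ref{prop:HN-minimal}. Along the cusp, the Galois conjugate eigenforms are determined by their residues. The tautological form \(\omega_h^{\mathrm{norm}}\) has residue \(c_i\) at \(x_i\) and \(-c_i\) at \(-x_i\). The standard description of eigenforms at an irreducible cusp (as in \cite[Section~6.1]{BHM} and M\"oller \cite{Moller}) says the generator of \(\mathcal L_{\sigma_j}\) at the cusp is the stable differential
\[
\omega_j=\sum_{i=1}^g \sigma_j(c_i)\Bigl(\frac{1}{z-x_i}-\frac{1}{z+x_i}\Bigr)dz .
\]
The reason is that the residues at the nodes are the periods over the vanishing cycles, and these cycles span a rational sublattice, so the real multiplication acts on their periods through \(\sigma_j\). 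I will take this as the starting point. The only thing to check is that \(\omega_j\) is nonzero and is anti-invariant under \(z\mapsto -z\); both are clear, because \(\sigma_j\) is injective and the \(c_i\) are nonzero.

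For (1), expand \(\omega_j\) at \(z=\infty\) exactly as was done for \(\omega_h^{\mathrm{norm}}\):
\[
\omega_j/dz = 2\sum_{r\ge1}\Bigl(\sum_i\sigma_j(c_i)x_i^{2r-1}\Bigr)z^{-2r}.
\]
By Proposition~\ref{prop:HN-minimal}, \(\omega_j\) vanishes to order at least \(2g-2j\) at \(\infty\). In the local coordinate \(w=1/z\) we have \(dz=-w^{-2}dw\), so a term \(z^{-2r}dz\) has order \(2r-2\). Vanishing to order at least \(2g-2j\) therefore forces the coefficients with \(1\le r\le g-j\) to vanish. Reindexing with \(r\mapsto r+1\) gives \(\sum_i\sigma_j(c_i)x_i^{2r+1}=0\) for \(0\le r\le g-j-1\), which is (1).

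For (2), combine the partial fractions over the common denominator:
\[
\omega_j=\frac{Q(z^2)}{P(z^2)}\,dz\cdot(\text{const}).
\]
By Proposition~\ref{prop:HN-minimal}, \(Q\) has degree exactly \(j-1\). Now compute residues. For \(\omega_h^{\mathrm{norm}}=2P'(1)dz/P(z^2)\), the residue at \(x_i\) is \(P'(1)/(x_iP'(x_i^2))=c_i\). For the form \(2P'(1)Q_j(z^2)dz/P(z^2)\), the residue at \(x_i\) is therefore \(c_iQ_j(x_i^2)\). Choose \(Q_j\) to be the scalar multiple of \(Q\) for which
\[
\omega_j = 2P'(1)\,Q_j(z^2)\,dz/P(z^2).
\]
Matching residues then gives \(Q_j(x_i^2)=\sigma_j(c_i)/c_i\), and \(Q_j\) still has degree exactly \(j-1\).

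Uniqueness holds for a simpler reason. The \(x_i^2\) are pairwise distinct, since the \(2g\) points \(\pm x_i\) are distinct node preimages. A polynomial of degree at most \(g-1\) is therefore determined by its values at these \(g\) points, by Lagrange interpolation.

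The main obstacle is justifying the starting identification, namely that the residue vector of the \(\sigma_j\)-eigenform is \((\sigma_j(c_i))\). If I cannot cite it cleanly, I would argue through the action of \(K\) on \(H_1\). The vanishing cycles \(\gamma_i\) span a Lagrangian subspace over \(\Q\) that is invariant under real multiplication; this is the cusp analysis of Proposition~\ref{prop:MM}. The eigenforms are \(\omega_{\sigma}\) with \(\int_{\gamma}\omega_\sigma=\sigma(\int_\gamma\omega)\) for rational \(\gamma\) in that span, and taking \(\gamma=\gamma_i\) yields \(\sigma_j(c_i)\) as residues. Continuity then carries this to the stable fiber.
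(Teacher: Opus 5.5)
Your proposal is correct and takes essentially the same route as the paper. You take the $\sigma_j$-eigenform at the cusp to be $\sum_i\sigma_j(c_i)\bigl(\frac{1}{z-x_i}-\frac{1}{z+x_i}\bigr)dz$ (citing BHM), write it as $Q_j(z^2)\,\omega_h^{\mathrm{norm}}$ with $\deg Q_j=j-1$ via Proposition~\ref{prop:HN-minimal}, get (2) from residues with uniqueness by interpolation at the distinct $x_i^2$, and get (1) from the expansion at $z=\infty$. The only differences are that you prove (1) before (2), and you add a sketch of why the residues are $\sigma_j(c_i)$, which the paper simply cites.
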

\begin{proof}
As in Section~6 of \cite{BHM}, the eigenform associated with $\sigma_j$ at
this cusp can be normalized as
\[
 \omega_h^j
 =\sum_{i=1}^g\sigma_j(c_i)
 \left(\frac1{z-x_i}-\frac1{z+x_i}\right)dz.
\]
Note that the node preimages are unchanged. By Proposition~\ref{prop:HN-minimal}, the form $\omega_h^j$ belongs
to $V_j$ but not to $V_{j-1}$, with $V_0=0$. The description of $V_j$
in that proposition gives
\[
 \omega_h^j=Q_j(z^2)\omega_h^{\mathrm{norm}},
 \qquad \deg Q_j=j-1.
\]  $Q_j$ is uniquely determined by its values at the $g$ distinct points $x^2_i$. Taking residues at $x_i$ proves (2). Moreover, expanding $\omega^j_h$ at $z=\infty$, the order $2g-2j$ zero, gives (1).
\end{proof}

\subsection{Trace duality}

By Proposition~\ref{prop:MM}, there exist $\rho\in K^\times$ and
$q_i\in\Q_{>0}$ such that $h_i=\rho q_i c_i$. Since $c_1=1$, we may set $\rho=h_1$ and $h_i=h_1q_ic_i$.

Set \(D_i=q_i^{-1}\) and \(D=\diag(D_1,\ldots,D_g)\).
Let \(m=(m_1,\ldots,m_g)\) be the {\em primitive
moduli vector}, defined by the unique vector of positive
integers proportional to \((h_i/c_i)_i\) and satisfying
\(\gcd(m_1,\ldots,m_g)=1\). Since \(D_1=1\), we have
\[
    D_i=\frac{m_1}{m_i}.
\]
\begin{lemma}\label{lem:top-duality}
Let $S=(\sigma_j(c_i))_{i,j}=(c_iQ_j(x^2_i))_{i,j}$, with rows indexed by nodes
and columns indexed by embeddings, and let
$H=\diag(\sigma_1(\rho),\ldots,\sigma_g(\rho))$.  Then

\[
             SHS^{\mathsf T}=D, \qquad S^{\mathsf T}D^{-1}S=H^{-1}.
\]

In particular, $S^{\mathsf T}D^{-1}S$ is diagonal. As a result, we have

\begin{enumerate}
    \item $x_i=\sigma_g(D^{-1}_ic_i) = \sigma_g(h_i/h_1)$ and $K=\Q(x_1,\dots,x_g)$.
    \item $K/\Q$ is Galois.
\end{enumerate}
\end{lemma}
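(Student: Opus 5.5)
The plan is to turn trace duality into the matrix identity, and then read off the two consequences from orthogonality relations.

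\textbf{The identity \(SHS^{\mathsf T}=D\).} Since \((h_i)\) is the trace-dual basis of \((c_i)\), we have \(\operatorname{Tr}_{K/\Q}(c_ih_k)=\sum_j\sigma_j(c_i)\sigma_j(h_k)=\delta_{ik}\). Substituting \(h_k=\rho q_kc_k\) with \(q_k\in\Q\) gives
\[
q_k\sum_j\sigma_j(c_i)\sigma_j(\rho)\sigma_j(c_k)=\delta_{ik},
\]
that is, \((SHS^{\mathsf T})_{ik}=q_k^{-1}\delta_{ik}=D_k\delta_{ik}\). The matrix \(S\) is invertible because \((c_i)\) is a \(\Q\)-basis of \(K\) (Proposition~\ref{prop:MM}), so its embedding matrix is nonsingular. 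Hence \(H=S^{-1}D(S^{\mathsf T})^{-1}\), and inverting gives \(S^{\mathsf T}D^{-1}S=H^{-1}\), which is diagonal.

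\textbf{Part (1).} Set \(v_i=D_i^{-1}\sigma_g(c_i)\). Diagonality of \(S^{\mathsf T}D^{-1}S\) says that \(\sum_i v_i\sigma_k(c_i)=0\) for every \(k<g\). By Proposition~\ref{prop:deg}(2) this reads \(\sum_i c_iv_iQ_k(x_i^2)=0\). Since \(\deg Q_k=k-1\), the \(Q_k\) with \(k<g\) span all polynomials of degree \(\le g-2\), so
\[
\sum_i c_iv_ix_i^{2r}=0\qquad\text{for } 0\le r\le g-2.
\]
On the other hand, the vanishing at infinity, which is Proposition~\ref{prop:deg}(1) with \(j=1\), gives the same relations for the vector \((x_i)\): \(\sum_i c_ix_i\,x_i^{2r}=0\) for \(0\le r\le g-2\). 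The nodes \(x_i^2\) are distinct and \(c_i\ne0\), so these \(g-1\) conditions are independent by Vandermonde. The solution space is therefore a line, and \(v=\lambda x\). Comparing first coordinates, \(x_1=1=c_1=D_1\) gives \(v_1=1\), so \(\lambda=1\). This proves \(x_i=\sigma_g(D_i^{-1}c_i)=\sigma_g(q_ic_i)=\sigma_g(h_i/h_1)\).

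To identify the field, note \(c_i=P'(1)/(x_iP'(x_i^2))\) is a rational expression in the \(x\)'s, so \(K\subseteq\Q(x_1,\dots,x_g)\subseteq\sigma_g(K)\). Both ends have degree \(g\) over \(\Q\), so \(K=\Q(x_1,\ldots,x_g)=\sigma_g(K)\).

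\textbf{Part (2), the main obstacle.} Here we must show \(\sigma_j(K)=K\) for every \(j\), not only for \(j=g\). Since \(\sigma_j(c_i)=c_iQ_j(x_i^2)\), it suffices to show that \(Q_j\) has coefficients in \(K\); then all conjugates of the basis \((c_i)\) lie in \(K\). My first attempt is as follows.
\begin{itemize}
\item The eigenforms \(\omega^j_h\) all live on the same stable curve, with the same node preimages \(\pm x_i\) and the same marked point \(\infty\).
\item Applying \(\sigma_j\) to the relations \(\sum_i c_ix_i^{2r-1}=0\) (\(1\le r\le g-1\)), which hold in \(K\), shows that \(\bigl(\sigma_j(x_i),\sigma_j(c_i)\bigr)\) is another solution of the same residue and vanishing system.
\item The form \(\sum_i\sigma_j(c_i)\bigl(\frac{1}{z-x_i}-\frac{1}{z+x_i}\bigr)dz\) has residue vector orthogonal, under \(D^{-1}\), to every column of \(S\) except the \(j\)-th, which pins it down inside the span of the \(\omega^k_h\).
\item Comparing these two descriptions should force \(\sigma_j(x_i)\) to be, up to sign and permutation, one of the \(x_{i'}\).
\end{itemize}
This would give \(\sigma_j(K)=\sigma_j(\Q(x))\subseteq\Q(x)=K\). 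If this rigidity step fails, the fallback is to exploit the identity \(\sigma_g(K)=K\) already obtained, together with the diagonal form \(H^{-1}=S^{\mathsf T}D^{-1}S\), which expresses each \(\sigma_j(\rho)\) as \(\sigma_j\) of an element of \(K\). From there I would aim to produce a generator of \(K\) whose minimal polynomial splits in \(K\).
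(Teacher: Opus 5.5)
\textbf{Gap in part (2).} Your derivation of $SHS^{\mathsf T}=D$ and $S^{\mathsf T}D^{-1}S=H^{-1}$, and your proof of part (1), are correct and essentially the paper's argument. You exhibit the common one-dimensional solution space via the $Q_k$ and a Vandermonde count; the paper instead notes that the first $g-1$ columns of the invertible $S$ have a one-dimensional orthogonal complement. Part (2), however, is not proved.

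The key step of your first attempt is the claim that comparing the conjugated data with $\omega_h^j$ ``should force'' $\sigma_j(x_i)\in\{\pm x_{i'}\}$. Nothing supports it. The pair $(\sigma_j(x_i),\sigma_j(c_i))$ describes a form with nodes at $\pm\sigma_j(x_i)$, so it lives on a different stable curve. Your third bullet only recovers $\omega_h^j$ itself. Worse, the claim is false. For the double heptagon, normalize the longest cylinder to $1$. All moduli are equal, so $D_i=1$ and $x_i=\sigma_g(c_i)$, and the normalized circumferences are
\[
1,\quad 2\cos(\pi/7)-1\approx0.802,\quad 4\cos^2(\pi/7)-2\cos(\pi/7)-1\approx0.445.
\]
The automorphism $\cos(\pi/7)\mapsto\cos(3\pi/7)$ sends $2\cos(\pi/7)-1$ to $2\cos(3\pi/7)-1\approx-0.555$. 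So $\{\pm c_i\}$ is not Galois-stable. Since the Galois group is abelian, neither is $\{\pm x_i\}=\sigma_g(\{\pm c_i\})$. Your fallback only restates the goal.

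\textbf{The missing idea.} Use the full diagonality of $S^{\mathsf T}D^{-1}S$, not just its last column. Its $(j,k)$ entry is
\[
\sum_{i=1}^g \frac{c_i^2}{D_i}\,Q_j(x_i^2)\,Q_k(x_i^2).
\]
So $Q_1,\dots,Q_g$ are orthogonal for the inner product $\langle A,B\rangle_*=\sum_i w_iA(x_i^2)B(x_i^2)$ with $w_i=c_i^2/D_i>0$. This product is positive definite on polynomials of degree $\le g-1$ because the $x_i^2$ are distinct. By part (1), both the weights $w_i$ and the nodes $x_i^2$ lie in $K$. Hence the Gram matrix of $1,Y,\dots,Y^{g-1}$ has entries in $K$, and Gram--Schmidt gives monic orthogonal polynomials $R_0,\dots,R_{g-1}\in K[Y]$.

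Now $Q_1,\dots,Q_{j-1}$ span the polynomials of degree $\le j-2$, and $\deg Q_j=j-1$ with $Q_j$ orthogonal to all of them. So $Q_j$ is a multiple of $R_{j-1}$. The normalization $Q_j(1)=\sigma_j(c_1)/c_1=1$ then gives $Q_j=R_{j-1}/R_{j-1}(1)\in K[Y]$. Therefore $\sigma_j(c_i)=c_iQ_j(x_i^2)\in K$ for all $i,j$. This is exactly the reduction you identified, and it shows $K/\Q$ is Galois.
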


\begin{proof}
Trace duality gives
\[
            \Tr_{K/\Q}(\rho c_i c_j)=D_i\delta_{ij}, \qquad    SHS^{\mathsf T}=D.
\]
Since \(D\) is invertible, so is \(S\), and hence
\[
    S^{\mathsf T}D^{-1}S=H^{-1}.
\]

Taking \(r=0\) in (1) of Proposition~\ref{prop:deg}, we see that
\((x_i)_i\) is orthogonal, for the standard bilinear pairing,
to the first \(g-1\) columns of \(S\). The same is true of
the last column of \(D^{-1}S\). These vectors are therefore
proportional. Since \(x_1=c_1=D_1=1\), they are indeed the same, so
\[
    x_i=D_i^{-1}\sigma_g(c_i)
        =\sigma_g(h_i/h_1).
\]
In particular, \(x_i\in\sigma_g(K)\). On the other hand,
the residue formula
\[
    c_i=\frac{P'(1)}{x_iP'(x_i^2)}
\]
gives
\[
    K\subseteq\Q(x_1,\ldots,x_g)\subseteq\sigma_g(K).
\]
Both $K$ and $\sigma_g(K)$ have degree \(g\) over \(\Q\).
Thus $K=\Q(x_1,\ldots,x_g)=\sigma_g(K).$

It remains to show that every $\sigma_j$ preserves \(K\).
Let $w_i=\frac{c_i^2}{D_i}\in K_{>0},$
and consider the pairing on polynomials of degree at most \(g-1\)
given by
\[
    \langle A,B\rangle_*
    =\sum_{i=1}^g w_iA(x^2_i)B(x^2_i).
\]
This pairing is
positive definite on real polynomials of degree at most \(g-1\), since \(x^2_i\) are distinct and real. The corresponding matrix in the monomial basis has entries in \(K\). Gram--Schmidt applied to \(1,Y,\ldots,Y^{g-1}\) gives unique monic orthogonal polynomials \(R_0,\ldots,R_{g-1}\in K[Y]\) with \(\deg R_j=j\). On the other hand, the polynomials \(Q_1,\ldots,Q_g\) are also
orthogonal for this pairing since \(S^{\mathsf T}D^{-1}S=H^{-1}\). Since \(\deg Q_j=j-1\),
\(Q_j\) is proportional to \(R_{j-1}\). Moreover,
\[
    Q_j(1)=\frac{\sigma_j(c_1)}{c_1}=1.
\]
Hence \(R_{j-1}(1)\ne0\) and
\[
    Q_j(Y)=\frac{R_{j-1}(Y)}{R_{j-1}(1)}\in K[Y].
\]
It follows that
\[
    \sigma_j(c_i)=c_iQ_j(x_i^2)\in K
\]
for every \(i,j\). Since the \(c_i\) generate \(K\), every
embedding \(\sigma_j\) preserves \(K\). Thus \(K/\Q\) is Galois.
\end{proof}

\section{\texorpdfstring{The Galois group $\operatorname{Gal}(K/\Q)$}{The Galois group Gal(K/Q)}} \label{sec:galois}

Since we have proved $K/\Q$ is Galois, the embeddings $\sigma_1,\dots, \sigma_g$ are now elements of the Galois group $\operatorname{Gal}(K/\Q)$. In this section, we study how the Harder--Narasimhan filtration restricts the group structure of $\operatorname{Gal}(K/\Q)$. For convenience, we relabel the elements of $\operatorname{Gal}(K/\Q)$ as\[
    \tau_r \coloneqq \sigma_g^{-1}\sigma_{g-r},\qquad r=0,\dots, g-1.
\]

Set
\[
    u_i=\frac{c_i}{D_i}=\sigma_g^{-1}(x_i),
    \qquad
    e_r=(\tau_r(u_i))_{i=1}^g,
    \qquad 0\le r\le g-1.
\]
We use the positive definite pairing
\[
    \langle v,w\rangle_D=v^{\mathsf T}Dw.
\]
The vectors \(e_0,\ldots,e_{g-1}\) are orthogonal in this pairing by Lemma~\ref{lem:top-duality}.

\begin{lemma} \label{lem:odd}
For each \(0\le r\le g-1\), there exists a unique odd
polynomial \(F_r\in K[z]\) of degree \(2r+1\) such that
\[
    \tau_r(u_i)=F_r(u_i)
    \qquad (1\le i\le g).
\] Moreover, if $2rs+r+s<g$, then
\[
    \tau_r\tau_s =\tau_s\tau_r= \tau_{2rs+r+s},\qquad \tau_r(F_s)\circ F_r=\tau_s(F_r)\circ F_s = F_{2rs+r+s}
\]
\end{lemma}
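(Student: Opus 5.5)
The plan is to compare two bases of $\R^g$: the $D$-orthogonal vectors $e_0,\dots,e_{g-1}$ and the ``odd power'' vectors
\[
v_k=(u_i^{2k+1})_{i=1}^g,\qquad 0\le k\le g-1.
\]
The goal is to show that the two bases give the same flag, i.e.\ $\Span(e_0,\dots,e_r)=\Span(v_0,\dots,v_r)$ for every $r$.

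\textbf{Step 1: $v_k$ is $D$-orthogonal to $e_s$ for $s>k$.} Since $D_i\in\Q$ and $u_i=c_i/D_i$, we have
\[
\langle e_s,v_k\rangle_D=\sum_i D_i\,\tau_s(u_i)\,u_i^{2k+1}=\sum_i \tau_s(c_i)\,u_i^{2k+1}.
\]
This lies in $K$, because $K/\Q$ is Galois by Lemma~\ref{lem:top-duality}. Apply $\sigma_g$ to it. Using $\sigma_g\tau_s=\sigma_{g-s}$ and $\sigma_g(u_i)=x_i$, it becomes
\[
\sum_i\sigma_{g-s}(c_i)\,x_i^{2k+1}.
\]
By Proposition~\ref{prop:deg}(1) with $j=g-s$, this vanishes for $k\le s-1$.

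\textbf{Step 2: the two flags agree.} The $e_s$ form a $D$-orthogonal basis, since $S$ is invertible. By Step 1, $v_k\in\Span(e_0,\dots,e_k)$. The $v_k$ are linearly independent: the $u_i=\sigma_g^{-1}(x_i)$ are nonzero with pairwise distinct squares, because the $x_i^2$ are distinct. This gives a Vandermonde in $u_i^2$ after factoring out $u_i$. Hence the flags coincide, and we can write
\[
e_r=\sum_{k\le r}a_k v_k .
\]
Here $a_r\neq 0$, since otherwise $e_r\in\Span(e_0,\dots,e_{r-1})$. Define $F_r(z)=\sum_k a_k z^{2k+1}$. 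The $a_k$ solve a nonsingular linear system with entries in $K$, so $F_r\in K[z]$ is odd of degree exactly $2r+1$.

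Uniqueness holds because an odd polynomial of degree $\le 2g-1$ has the form $zG(z^2)$ with $\deg G\le g-1$. Such a $G$ is determined by its values at the $g$ distinct points $u_i^2$, since $u_i\ne0$.

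\textbf{Step 3: composition.} For the composition law, compute
\[
\tau_r\tau_s(u_i)=\tau_r\bigl(F_s(u_i)\bigr)=\tau_r(F_s)\bigl(\tau_r(u_i)\bigr)=\tau_r(F_s)\bigl(F_r(u_i)\bigr).
\]
The polynomial $\tau_r(F_s)\circ F_r$ is odd of degree $(2r+1)(2s+1)=2N+1$, where $N=2rs+r+s$. Its leading coefficient is nonzero, being a product of nonzero leading coefficients. Since $\tau_r\tau_s\in\operatorname{Gal}(K/\Q)$, it equals $\tau_t$ for some $t$, and then its vector is $e_t$. When $N<g$, the expansion of $e_t$ in the basis $v_0,\dots,v_{g-1}$ is unique. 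By Step 2 its top nonzero coefficient sits at $v_t$. The composite polynomial instead shows that this top coefficient sits at $v_N$. Hence $t=N$, and uniqueness of $F_N$ gives
\[
\tau_r(F_s)\circ F_r=F_N .
\]
Exchanging $r$ and $s$ gives $\tau_s\tau_r=\tau_N$ as well, since the $\tau$'s are distinct. This yields the stated symmetric identities.

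The main obstacle is Step 1: getting exactly the right orthogonality range from Proposition~\ref{prop:deg}(1), including the index bookkeeping $\sigma_g\tau_s=\sigma_{g-s}$ and the passage through $\sigma_g$, which is legitimate only because $K$ is Galois. Once the flag identification is in place, the rest is formal.
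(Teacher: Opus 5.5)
Your proposal is correct and follows essentially the same route as the paper: you push the pairing through $\sigma_g$ (the paper instead applies $\sigma_g^{-1}$ to Proposition~\ref{prop:deg}(1)) to get that $(u_i^{2k+1})_i$ is $D$-orthogonal to $e_{k+1},\dots,e_{g-1}$. As in the paper, you then identify the two flags by the Vandermonde independence and a dimension count, and obtain the composition law from uniqueness of odd interpolating polynomials of degree at most $2g-1$. Your check that $a_r\neq 0$ spells out a point the paper leaves implicit, and the appeal to the $\tau$'s being distinct is unnecessary, since the hypothesis $2rs+r+s<g$ is already symmetric in $r$ and $s$.
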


\begin{proof}
Applying \(\sigma_g^{-1}\) to (1) of Proposition~\ref{prop:deg}, with
\(j=g-s\), gives
\[
    \sum_iD_i\tau_s(u_i)u_i^{2r+1}=0,
    \qquad 0\le r<s.
\]
Thus \((u_i^{2r+1})_i\) is orthogonal to
\(e_{r+1},\ldots,e_{g-1}\), and therefore belongs to
\(\Span_K(e_0,\ldots,e_r)\).

The numbers \(u_i\) are nonzero and their squares are distinct. So the Vandermonde determinant shows that
\[
    (u_i)_i,\ (u_i^3)_i,\ \ldots,\ (u_i^{2g-1})_i
\]
are linearly independent. By dimension comparison, we obtain
\[
    \Span_K\{(u_i^{2k+1})_i:0\le k\le r\}
    =\Span_K(e_0,\ldots,e_r).
\]
This gives the unique polynomial \(F_r\) of degree
\(2r+1\).

Suppose now that \(2rs+r+s<g\), and denote
\(\tau_r\tau_s=\tau_t\). Then
\[
    F_t(u_i)
    =\bigl(\tau_r(F_s)\circ F_r\bigr)(u_i).
\]
Both sides are evaluations of odd polynomials of degree
at most \(2g-1\). Their difference is \(zG(z^2)\), where
\(\deg G\le g-1\), and \(G\) vanishes at the \(g\) distinct
points \(u_i^2\). Hence $G\equiv 0$ and the two polynomials coincide.
The degree comparison gives
\[
    2t+1=(2r+1)(2s+1),
    \qquad t=2rs+r+s.
\]
\end{proof}

\subsection{\texorpdfstring{Tridiagonal matrix $J_0$}{Tridiagonal matrix J0}}

For each \(r\), choose \(\varepsilon_r\in\{\pm1\}\) so that
\(\varepsilon_rF_r\) has positive leading coefficient, and set
\[
    \phi_r=\varepsilon_r\frac{e_r}{\|e_r\|_D},
    \qquad
    U=(e_0\mid\cdots\mid e_{g-1}),
    \qquad
    \Phi=(\phi_0\mid\cdots\mid\phi_{g-1}).
\]
Then \(\Phi^{\mathsf T}D\Phi=I\), and
\(\Lambda=\Phi^{-1}U\) is an invertible diagonal matrix.
Define
\[
    \Delta_0=\diag(u_1^2,\ldots,u_g^2),
    \qquad
    J_0=U^{-1}\Delta_0U,
    \qquad
    \mathcal J_0=\Phi^{-1}\Delta_0\Phi.
\]
Thus \(J_0=\Lambda^{-1}\mathcal J_0\Lambda\). The rows and columns of \(J_0\) and \(\mathcal J_0\) are indexed by \(0,\ldots,g-1\). The matrices $\Delta_0,U,J_0$ have entries in $K$.

Since $\Delta_0$ is diagonal, we have
\[
    (\mathcal J_0)_{rs}=\langle \phi_r,\Delta_0\phi_s \rangle_D=\langle \Delta_0\phi_r,\phi_s \rangle_D =(\mathcal J_0)_{sr}.
\] That is, $\mathcal J_0$ is symmetric. On the other hand, multiplication by \(u_i^2\) increases the degree of an odd evaluation polynomial by two. Thus \(\Delta_0\phi_r\in\Span_\R(\phi_0,\ldots,\phi_{r+1})\)
for \(0\le r\le g-2\). By symmetry,
\((\mathcal J_0)_{rs}=0\) whenever \(|r-s|>1\). Thus $\mathcal J_0$ is tridiagonal. Moreover, its adjacent off-diagonal
entries are positive by comparison of leading coefficients of $u_i^2\phi_r$ and $\phi_{r+1}$.

The tridiagonal matrix $J_0$ encodes the group structure of $\operatorname{Gal}(K/\Q)$ as follows.

\begin{lemma}
\label{lem:jacobi-covariance}
Let \(\pi_r\) be the permutation of \(\{0,\ldots,g-1\}\)
characterized by
\[
    \tau_r\tau_j=\tau_{\pi_r(j)},
\]
and let \(P_r\) be its permutation matrix.
Denote \(F_r(z)=zR_r(z^2)\), with \(R_r\in K[Y]\) of degree \(r\).
Then
\begin{equation}\label{eq:general-covariance}
    \tau_r(J_0)
    =P_r^{-1}J_0R_r(J_0)^2P_r.
\end{equation}

Consequently, the off-diagonal support graph of
\(J_0R_r(J_0)^2\) is the image under \(\pi_r\) of the path
\[
    \mathcal P
    =\bigl\{\{j,j+1\}:0\le j<g-1\bigr\}.
\]
This matrix has nonzero entries at $(i,j)$ only if \(|i-j|\le 2r+1\), and every entry
at exactly \(|i-j|=2r+1\) is nonzero.
\end{lemma}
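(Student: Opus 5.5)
The plan is to apply the Galois element \(\tau_r\) entrywise to the identity \(U J_0=\Delta_0 U\). All three matrices have entries in \(K\), and \(\tau_r\in\operatorname{Gal}(K/\Q)\) by Lemma~\ref{lem:top-duality}(2). Matrix products commute with entrywise application of a field automorphism, so
\[
 \tau_r(J_0)=\tau_r(U)^{-1}\,\tau_r(\Delta_0)\,\tau_r(U).
\]
It then suffices to identify \(\tau_r(U)\) and \(\tau_r(\Delta_0)\) in terms of \(U\) and \(\Delta_0\).

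For \(\tau_r(U)\): its \(j\)-th column is \((\tau_r\tau_j(u_i))_i=(\tau_{\pi_r(j)}(u_i))_i=e_{\pi_r(j)}\). Hence \(\tau_r(U)=UP_r\), where \(P_r\) is the permutation matrix sending the \(j\)-th standard basis vector to the \(\pi_r(j)\)-th; I will fix the convention for \(P_r\) so that this holds. For \(\tau_r(\Delta_0)\): Lemma~\ref{lem:odd} gives
\[
 \tau_r(u_i)^2=F_r(u_i)^2=u_i^2R_r(u_i^2)^2 .
\]
Since \(\Delta_0\) is diagonal, this says \(\tau_r(\Delta_0)=\Delta_0R_r(\Delta_0)^2\). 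The point is that the coefficients of \(R_r\) do not have to be moved by \(\tau_r\): we only evaluate the fixed polynomial \(R_r\in K[Y]\) at the diagonal matrix. Substituting both identities,
\[
 \tau_r(J_0)=P_r^{-1}U^{-1}\Delta_0R_r(\Delta_0)^2UP_r=P_r^{-1}J_0R_r(J_0)^2P_r,
\]
using \(U^{-1}\Delta_0^kU=J_0^k\). This is \eqref{eq:general-covariance}.

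For the consequences, I use that \(J_0=\Lambda^{-1}\mathcal J_0\Lambda\) with \(\Lambda\) diagonal and invertible. So \(J_0\) has the same zero pattern as \(\mathcal J_0\): it is tridiagonal with all entries at \((j,j\pm1)\) nonzero. A field automorphism preserves whether an entry is zero or not. Therefore the off-diagonal support graph of \(\tau_r(J_0)\) is exactly the path \(\mathcal P\). By \eqref{eq:general-covariance}, \(J_0R_r(J_0)^2=P_r\,\tau_r(J_0)\,P_r^{-1}\). Conjugation by \(P_r\) relabels the indices by \(\pi_r\), so the support graph of \(J_0R_r(J_0)^2\) is \(\pi_r(\mathcal P)\). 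I will check the direction of the relabeling against the chosen convention for \(P_r\).

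For the band structure, \(J_0R_r(J_0)^2\) is a polynomial of degree \(2r+1\) in the tridiagonal matrix \(J_0\). A power \(J_0^k\) has entries only for \(|i-j|\le k\). At distance exactly \(k\), the entry of \(J_0^k\) is the single product of consecutive off-diagonal entries along the path from \(i\) to \(j\), which is nonzero. Lower-degree terms do not reach distance \(2r+1\). Hence the entry of \(J_0R_r(J_0)^2\) at \(|i-j|=2r+1\) equals (leading coefficient of \(R_r\))\(^2\) times this nonzero product. That leading coefficient is nonzero because \(\deg R_r=r\) by Lemma~\ref{lem:odd}.

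The only delicate point I expect is bookkeeping: getting the permutation-matrix convention consistent so that \(\tau_r(U)=UP_r\) rather than \(UP_r^{-1}\), and so that the support graph comes out as \(\pi_r(\mathcal P)\) rather than \(\pi_r^{-1}(\mathcal P)\). The rest is formal.
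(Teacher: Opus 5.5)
Your proposal is correct and follows the paper's proof essentially step for step. You apply $\tau_r$ entrywise to $J_0=U^{-1}\Delta_0U$, identify $\tau_r(U)=UP_r$ and $\tau_r(\Delta_0)=\Delta_0R_r(\Delta_0)^2$, and carry the path support of $J_0$ over to $\tau_r(J_0)$ because field automorphisms preserve which entries vanish. You then read off the extreme band entries as the squared leading coefficient of $R_r$ times a product of off-diagonal entries of $J_0$.

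The convention you fix (the $j$-th standard basis vector goes to the $\pi_r(j)$-th) is the right one. It gives $\tau_r(U)=UP_r$ and $(P_rMP_r^{-1})_{ab}=M_{\pi_r^{-1}(a),\pi_r^{-1}(b)}$, so the support is indeed $\pi_r(\mathcal P)$ and not $\pi_r^{-1}(\mathcal P)$.
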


\begin{proof}
The definition of \(P_r\) gives $\tau_r(U)=UP_r$. Furthermore,
\[
    \tau_r(\Delta_0)
    =\diag\bigl(F_r(u_1)^2,\ldots,F_r(u_g)^2\bigr)
    =\Delta_0R_r(\Delta_0)^2.
\]
Applying \(\tau_r\) to \(J_0=U^{-1}\Delta_0U\) proves
\eqref{eq:general-covariance}. An automorphism of \(K\) preserves whether an entry is zero.
Thus \(\tau_r(J_0)\) has the same support graph \(\mathcal P\) as \(J_0\). Equation~\eqref{eq:general-covariance} gives the path permuted by $\tau_r$.

Finally, \(\deg(YR_r(Y)^2)=2r+1\). If \(\ell_r\) is the
leading coefficient of \(R_r\), then
\[
    \bigl(J_0R_r(J_0)^2\bigr)_{i,i+2r+1}
    =\ell_r^2
     \prod_{k=i}^{i+2r}(J_0)_{k,k+1}\ne0.
\]
\end{proof}

These lemmas indicate that we can recover the group structure of $\operatorname{Gal}(K/\Q)$, from the labeling $\tau_r$. At the end of this section, we prove

\begin{theorem}\label{thm:weights}
Suppose that \(g\ge5\) and that
\(\Omega\mathcal M_g(2g-2)^{\mathrm{hyp}}\) contains an
algebraically primitive Teichm\"uller curve. Then \(2g+1\) is prime or \(g\) is a power of two. Also, we have an isomorphism
    \[
        \deg : \operatorname{Gal}(K/\Q)\to (\Z/N\Z)^\times/\{\pm1\},\qquad \tau_r \mapsto \pm \deg F_r = \pm(2r+1) \pmod N
    \] for $N=2g+1$ prime, or $N=4g$ a power of two. At every cusp, the primitive moduli vector is, up to permutation,
\[
 (1,\ldots,1)\qquad\text{or}\qquad(2,1,\ldots,1).
\] The second moduli vector requires \(g\) to be a power of two.
\end{theorem}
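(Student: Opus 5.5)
The plan is to extract the group structure of $G=\operatorname{Gal}(K/\Q)$ from the labeling $\tau_r$ through Lemma~\ref{lem:odd}, and then use Lemma~\ref{lem:jacobi-covariance} to turn the tridiagonality of $J_0$ into a rigid combinatorial condition. The target is the map $\tau_r\mapsto\pm(2r+1)$, so we first set up the bookkeeping. Identify the index $r$ with the odd integer $2r+1$, and take its class in $\Omega_N=\{\text{odd residues mod }N\}/\{\pm1\}$ for a suitable $N$. Lemma~\ref{lem:odd} says that whenever $(2r+1)(2s+1)<2g+1$, the product $\tau_r\tau_s$ is labeled by the integer product. So on ``small'' labels the law is ordinary multiplication of odd integers. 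The task is to show that the law extends consistently to all labels modulo some $N$, and that $N$ must be $2g+1$ or $4g$.

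\textbf{Step 1: determine $\pi_1$ exactly.} Take $r=1$, so that $F_1$ is cubic and $\tau_1\tau_j=\tau_{3j+1}$ for $3j+1<g$. By Lemma~\ref{lem:jacobi-covariance}, $J_0R_1(J_0)^2$ is a band matrix of half-width $3$ whose full outermost band is nonzero. Its off-diagonal support graph is also a Hamiltonian path, namely the image of $\mathcal P$ under $\pi_1$. A Hamiltonian path on $\{0,\dots,g-1\}$ that contains all edges $\{i,i+3\}$ is forced: those edges already form three chains (one per residue mod $3$), and they can be joined only by a bounded number of extra edges of length $\le2$ at the endpoints. The known values $\pi_1(j)=3j+1$ for small $j$ pin down the endpoint gluing. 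The result should be that $\pi_1$ acts as $j\mapsto$ the index of $\pm3(2j+1)$ reduced into $\{1,3,\dots,2g-1\}$ modulo $N$, where $N\in\{2g+1,4g\}$ according to the two possible gluings (a fold at the ends of type ``$2g+1$'' versus type ``$4g$'').

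\textbf{Step 2: extend to all $\tau_r$.} We expect Step~1 to be the main obstacle. It is a finite combinatorial case analysis, and it is exactly where $g\ge5$ (and, for the small cases, $g\ge8$ before Section~\ref{sec:small}) matters, because there must be enough small products to fix the gluing. Granting it, induction with the commutativity relations of Lemma~\ref{lem:odd} gives $\pi_r(j)\equiv\pm(2r+1)(2j+1)$ mod $N$ for every $r$. Each $\tau_r$ is then a product of known ones, and the band-structure constraint fixes any remaining $\tau_r$ for $r$ coprime to $3$.

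\textbf{Step 3: read off $N$ and the moduli vector.} The map $\deg$ is then a bijective homomorphism from $G$ onto the set of odd classes mod $N$ up to sign. That set must form a group of order $g$ under multiplication, so every odd residue is a unit mod $N$. For $N=2g+1$ this forces $2g+1$ prime; for $N=4g$ it forces $4g$ to be a power of two. This gives the isomorphism and the first claim.

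For the moduli vector, use $D=\diag(m_1/m_i)$ together with $SHS^{\mathsf T}=D$ from Lemma~\ref{lem:top-duality}. The explicit group action makes the $x_i=\sigma_g(h_i/h_1)$ Galois conjugates in the cyclotomic-type field, up to the normalization $x_1=1$. This should identify the $x_i^2$ with the squares of conjugates of $2\cos(2\pi k/N)$-type quantities, which the $\pi_r$ permute in a transitive pattern. Then $D_i$ is constant on $G$-orbits of nodes, except for at most one distinguished node. That exceptional node is the fixed point of the fold, which exists only in the $4g$ case. It yields the moduli vectors $(1,\dots,1)$ and $(2,1,\dots,1)$, and the second occurs only when $g$ is a power of two.
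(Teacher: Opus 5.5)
There is a genuine gap. Your argument only uses the \emph{zero pattern} of $J_0R_r(J_0)^2$, i.e.\ which pairs are edges of $\pi_r\mathcal P$, and never the \emph{values} of its entries. That information cannot yield the conclusions about $K$ or about the moduli vector.

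The weights $D_i=m_1/m_i$ are metric data attached to nodes, while the $\pi_r$ only record how $\operatorname{Gal}(K/\Q)$ permutes embedding labels. There is no a priori Galois action on nodes. Indeed $x_1=1$, so the $x_i$ are not conjugates of one another; in the end $u_i=\cos\theta_i/\cos\theta_1$. The paper's own case analysis shows the problem concretely. Cases A and B have identical labelings (both are multiplication by $2r+1$ modulo $4g$, so every $\pi_r$ agrees), yet A gives $(2,1,\ldots,1)$ and B gives $(1,\ldots,1)$. Likewise, an abstract isomorphism $\operatorname{Gal}(K/\Q)\cong(\Z/N\Z)^\times/\{\pm1\}$ does not give $K=\Q(\zeta_N)^+$.

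What is missing is turning the support condition into polynomial equations on the Jacobi coefficients. For $g\ge8$ the paper first uses $\tau_1\tau_2=\tau_2\tau_1$ to get $F_1(z)=T_3(sz)/T_3(s)$, $F_2(z)=T_5(sz)/T_5(s)$ and $\tau_1(s^2)=T_3(s)^2$. With $\tau_1^2=\tau_4$, this makes $(T_{2r+1}(v_i))_i$ proportional to $e_r$ for $r\le4$. That seeds $J=\Phi^{-1}\diag(2v_i^2-1)\Phi$ with $\alpha_0=\tfrac12$, $\alpha_1=\alpha_2=\alpha_3=0$ and $\beta_0=\cdots=\beta_3=\tfrac12$. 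The vanishing of the off-path entries of $T_3(J)$ then propagates this to all of $J$, up to the four endings A--D. Their characteristic polynomials give the $v_i$ as explicit cosines. From these come $K$, the cyclotomic action, and $D_i\sum_r\cos^2((2r+1)\theta_i)=\mathcal N$. For $g=5,6$ the paper does begin with path combinatorics like yours, but only to decide which entries of $Ap_1(A)^2$ and $Ap_2(A)^2$ vanish; it then solves those equations for the $a_i,b_i$.

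The combinatorial steps also fail as stated. For $g\ge8$, the forced edges $\{i,i+3\}$ form the chains $i\equiv0,1,2\pmod 3$. Once the path starts $1,4,7,\dots$, there are exactly two gluings:
\begin{itemize}
\item[(a)] jump from the top of the $1$-chain to the top of the $0$-chain, descend to $0$, then finish $2,5,\dots$;
\item[(b)] jump to the top of the $2$-chain, descend to $2$, then finish $0,3,\dots$.
\end{itemize}
Which of these matches $2g+1$ and which matches $4g$ depends on $g\bmod 3$. For $g\equiv0$ (resp.\ $1$) $\pmod 3$, gluing (a) (resp.\ (b)) fixes $g-1$. That gluing is therefore killed by $\tau_1\ne1$, not matched with any $N$.

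Worse, agreeing with multiplication by $3$ modulo $N$ yields no arithmetic. For $g=10$ the surviving path $(1,4,7,9,6,3,0,2,5,8)$ is exactly $\times3$ on odd classes modulo $40$ and has no fixed point. It dies only because its cycles have lengths $4,4,2$, which is impossible for left multiplication in a group of order $10$.

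Your Step~2 is unsupported. Lemma~\ref{lem:odd} controls $\tau_r\tau_s$ only when $(2r+1)(2s+1)<2g+1$, and $\pi_1$ only describes multiplication by $\tau_1$. Moreover $\tau_1$ need not generate: it has order $3$ when $g=6$. Commutativity of $\operatorname{Gal}(K/\Q)$ is also not known in advance. For $r\ge2$ the band has half-width $2r+1$ and admits many gluings, so an induction on support data alone does not close.
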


\subsection{\texorpdfstring{Computation of $F_r$}{Computation of Fr}}

In this subsection, we assume $g\ge 8$. Let $T_n$ denote the Chebyshev polynomial satisfying $T_n(\cos \theta)=\cos(n\theta)$. For example, $T_2(z)=2z^2-1$ and $T_3(z)=4z^3-3z$. In particular, they satisfy $T_n\circ T_m=T_{mn}$. We now compute the odd polynomials \(F_1\) and \(F_2\) explicitly.
Write
\[
    F_1(z)=az^3+bz,\qquad
    F_2(z)=cz^5+dz^3+ez.
\]
Then \(a,c\ne0\), \(a+b=1\), and \(c+d+e=1\) by evaluating at $u_1=1$. On the other hand,
\[
    \langle e_0,e_1 \rangle_D = a\sum_i D_iu_i^4 + b \sum_i D_i u_i^2 =0,
\] so $b/a<0$. Set $s\coloneqq \sqrt{-3a/4b}$ so that
\[
    F_1(z)=\frac{T_3(sz)}{T_3(s)}.
\]

Since \(2g-1\ge 15\), Lemma~\ref{lem:odd} gives $\tau_1\tau_2=\tau_2\tau_1$ and $\tau_1(F_2)\circ F_1=\tau_2(F_1)\circ F_2.$ Comparing coefficients of degrees \(15,13,11,9\), in order,
gives
\[
 \tau_1(c)=\frac{\tau_2(a)c^3}{a^5},\quad
 d=\frac{5bc}{3a},\quad
 e=\frac{5b^2c}{9a^2},\quad
 \tau_1(d)=\frac{5\tau_2(a)b^3c^3}{27a^6}.
\]

By direct computation we obtain
\[
    F_2(z)=\frac{T_5(sz)}{T_5(s)}
\]

Moreover, we obtain
\[
    \frac{5\tau_2(a)b^3c^3}{27a^6}=\tau_1(d) = \frac{5\tau_1(b)}{3\tau_1(a)}\tau_1(c) = \frac{5\tau_1(b)}{3\tau_1(a)}\frac{\tau_2(a)c^3}{a^5}.
\] In particular,

\[
\tau_1(b/a)=\frac{b^3}{9a},\qquad \tau_1(s^2)=T_3(s)^2.
\]

Denote $v_i=su_i$. Since $s^2\in K$, we can extend \(\tau_1\) to an automorphism
\(\widetilde\tau_1\) of \(K(s)\), with
\(\widetilde\tau_1(s)=\varepsilon T_3(s)\) for some $\varepsilon=\pm1$. Since \(T_3\) is odd and has rational coefficients,

\[
     \widetilde\tau_1(v_i)=\widetilde\tau_1(s)\tau_1(u_i)=\varepsilon T_3(s)F_1(u_i)= \varepsilon T_3(v_i).
\] Also from $\tau_1^2=\tau_4$, we have  $\widetilde\tau^2_1(s)\tau_4(u_i)=\widetilde\tau^2_1(v_i) = \varepsilon^2 T_3\circ T_3 (v_i)=T_9(v_i)$. So vectors $f_r\coloneqq (T_{2r+1}(v_i))_i$ are orthogonal with the metric $D$, as they are proportional to $e_r$, for $r=0,1,2,4$.
Let
\[
 \mathcal N\coloneqq \|f_1\|^2_D = \langle f_1,f_1\rangle_D ,\qquad
 C_k=\sum_iD_iT_k(v_i).
\]
The product-to-sum identity \(2T_rT_t=T_{r+t}+T_{|r-t|}\) among Chebyshev polynomials, applied to the orthogonal pairs \((1,3)\), \((1,5)\), \((3,5)\), \((1,9)\), \((3,9)\), and \((5,9)\), gives
\[
 C_4=-C_2,\quad C_6=C_2,\quad C_8=-C_2,\quad
 C_{10}=C_2,\quad C_{12}=-C_2,\quad C_{14}=C_2.
\]
Thus \[\langle f_3,f_0\rangle_D = \sum_i D_i T_{1}(v_i)T_{7}(v_i)= \frac{1}{2}\sum_i D_i (T_{6} (v_i) + T_{8} (v_i)) =\frac{C_{6}+C_{8}}{2}=0.\] Similarly, $\langle f_3,f_1\rangle_D=\langle f_3,f_2\rangle_D=0$ and $f_3$ is orthogonal to $f_0,f_1,f_2$. By degree comparison, it is also contained in the orthogonal complement of $f_4$.

By Lemma~\ref{lem:odd}, \(f_3\in\Span(e_0,e_1,e_2,e_3)\). Since \(f_3\) is orthogonal to \(f_0,f_1,f_2\), it is a nonzero multiple of \(e_3\).

For \(0\le r\le3\), the product-to-sum identity gives
\[
    \|f_r\|_D^2
    =\frac{C_0+C_{4r+2}}2
    =\frac{C_0+C_6}2
    =\mathcal N.
\]
In particular,
\[
    \mathcal N=\sum_iD_iv_i^2\in K.
\]
Since \(\widetilde\tau_1(v_i)=\varepsilon T_3(v_i)\),
we have
\[
    \tau_1(\mathcal N)=\|f_1\|_D^2=\mathcal N.
\]
Using \(\tau_4=\tau_1^2\), we similarly obtain
\[
    \|f_4\|_D^2=\tau_4(\mathcal N)=\mathcal N.
\]
This gives
\[
    \phi_r=\frac{f_r}{\sqrt{\mathcal N}},
    \qquad 0\le r\le4.
\]
For the remaining indices, we retain the orthonormal vectors
\(\phi_r\) defined before Lemma~\ref{lem:jacobi-covariance}.

\subsection{Computation of the tridiagonal matrix}

Let \(\Delta \coloneqq 2s^2\Delta_0-I= \operatorname{diag} (T_2(v_i))=\operatorname{diag} (2v_i^2-1)\).

Consider the symmetric matrix

\[
    J\coloneqq \Phi^{-1} \Delta \Phi = 2s^2 \mathcal{J}_0-I.
\]

$J$ is tridiagonal and we set
\[
    \Delta \phi_r =  \beta_{r-1} \phi_{r-1} + \alpha_r \phi_r + \beta_r \phi_{r+1}.
\] Moreover, we always have $\beta_r>0$ by the leading coefficient comparison.

\[
J=
\begin{pmatrix}
\alpha_0&\beta_0&0&\cdots&0\\
\beta_0&\alpha_1&\beta_1&\ddots&\vdots\\
0&\beta_1&\alpha_2&\ddots&0\\
\vdots&\ddots&\ddots&\ddots&\beta_{g-2}\\
0&\cdots&0&\beta_{g-2}&\alpha_{g-1}
\end{pmatrix}.
\]

From computation with $r=0,1,2,3$, we have
\[
\alpha_0=\tfrac12,\quad \alpha_1=\alpha_2=\alpha_3=0,
 \qquad \beta_0=\beta_1=\beta_2=\beta_3=\tfrac12.
\]

Set \(B=U^{-1}\Delta U\). Since
\(\Lambda=\Phi^{-1}U\) is invertible and diagonal, we have
\[
    B=\Lambda^{-1}J\Lambda,
    \qquad
    T_3(B)=\Lambda^{-1}T_3(J)\Lambda.
\]
Thus \(T_3(B)\) and \(T_3(J)\) have the same zero pattern.
Also,
\[
    B=2s^2J_0-I\in M_g(K).
\]

The identity \(T_3\circ T_2=T_2\circ T_3\), together with
\(\widetilde\tau_1(v_i)=\varepsilon T_3(v_i)\), gives
\[
    \tau_1(\Delta)=T_3(\Delta).
\]
Let \(P_1\) be the permutation matrix defined by
\(\tau_1(U)=UP_1\). Applying \(\tau_1\) to
\(B=U^{-1}\Delta U\), we obtain
\[
    \tau_1(B)
    =P_1^{-1}U^{-1}\tau_1(\Delta)UP_1
    =P_1^{-1}T_3(B)P_1.
\]

The off-diagonal support graph of \(B\), and hence of
\(\tau_1(B)\), is the path
\[
    \mathcal P
    =\bigl\{\{j,j+1\}:0\le j<g-1\bigr\}.
\]
By Lemma~\ref{lem:jacobi-covariance}, the off-diagonal support graph of
\(T_3(B)\), and hence of \(T_3(J)\), is the
\(\mathcal P\) permuted by \(\tau_1\).
It has exactly \(g-1\) edges, or equivalently \(g-1\)
nonzero entries strictly above the diagonal.

Since \(J\) is tridiagonal, \(T_3(J)=4J^3-3J\) has nonzero entries at $(i,j)$ only if $|i-j|\le3$. Since \(\beta_r>0\), we have
\[
    (T_3(J))_{r,r+3}
    =4\beta_r\beta_{r+1}\beta_{r+2}>0,
    \qquad 0\le r\le g-4.
\] We already have $g-3$ nonzero entries at $(0,3),\dots, (g-4,g-1)$. The indices $3,4,\dots,g-4$ appeared twice and cannot appear anymore. Therefore
\[
     (T_3(J))_{i,i+2}=0\ (1\le i\le g-4),\qquad
     (T_3(J))_{i,i+1}=0\ (2\le i\le g-4).
\]

Direct multiplication gives
\[
 (T_3(J))_{i,i+2}
 =4\beta_i\beta_{i+1}(\alpha_i+\alpha_{i+1}+\alpha_{i+2}),
 \]
 \[
 (T_3(J))_{i,i+1}
 =\beta_i\bigl[4(\beta_{i-1}^2+\beta_i^2+\beta_{i+1}^2
 +\alpha_i^2+\alpha_i\alpha_{i+1}+\alpha_{i+1}^2)-3\bigr].
\]

This implies $\alpha_1=\dots=\alpha_{g-2}=0$ and $\beta_0=\dots=\beta_{g-3}=1/2$. So $(T_3(J))_{0,2} = 4\beta_0\beta_1\alpha_0=\frac{1}{2}$ and the index $0$ also appeared twice. The index $2$ cannot appear anymore because all the candidates are already dropped.

Now it remains to determine $\alpha\coloneqq \alpha_{g-1}$ and $\beta\coloneqq \beta_{g-2}>0$. The undetermined part of $T_3(J)$ looks like

\[
\begin{pmatrix}
0&0&0&\beta\\
0&0&A&C\\
0&A&4\alpha\beta^2&B\\
\beta&C&B&\alpha(4\alpha^2+8\beta^2-3)
\end{pmatrix},
\]
where
\[
\begin{aligned}
    A&=T_3(J)_{g-3,g-2}=\frac{4\beta^2-1}{2},\\
    B&=T_3(J)_{g-2,g-1}=\beta(4\beta^2+4\alpha^2-2),\\
    C&=T_3(J)_{g-3,g-1}=2\alpha\beta.
\end{aligned}
\]

Exactly one of the three entries is nonzero. Setting the other two equal to zero yields
\[
 (\alpha,\beta^2)\in
 \{(\tfrac12,\tfrac14),(-\tfrac12,\tfrac14),
   (0,\tfrac14),(0,\tfrac12)\}.
\]

We call these Cases A, B, C, and D, respectively. For each case, up to scalar multiplication, the characteristic polynomial of $J$ is

\[
 (y-1)U_{g-1}(y),\qquad T_g(y),\qquad
 U_g(y)-U_{g-1}(y),\qquad T_g(y)-T_{g-1}(y),
\] respectively, where $U_n$ is the Chebyshev polynomial defined by $U_n(\cos\theta)=\frac{\sin((n+1)\theta)}{\sin\theta}$.

Since the eigenvalues of \(J\) are \(T_2(v_i)\) and \(v_i>0\),
the four cases give the following values, up to
permutation:
\begin{center}
\begin{tabular}{@{}cccc@{}}
\toprule
Case & \(\alpha\) & \(\beta^2\) & \(v_i\), indexed by \(0\le k<g\)\\
\midrule
A & \(1/2\) & \(1/4\) & \(\cos(k\pi/(2g))\)\\[2pt]
B & \(-1/2\) & \(1/4\) & \(\cos((2k+1)\pi/(4g))\)\\[2pt]
C & \(0\) & \(1/4\) & \(\cos((2k+1)\pi/(4g+2))\)\\[2pt]
D & \(0\) & \(1/2\) & \(\cos(k\pi/(2g-1))\)\\
\bottomrule
\end{tabular}
\end{center}

Let \(L=\Q(v_1,\ldots,v_g)\). Since \(u_1=1\) and $s^2\in K$,

\[
 K=\Q(u_i:1\le i\le g),\qquad
 L=K(s),\qquad [L:K]\le2.
\]

\begin{proof}[Proof of Theorem~\ref{thm:weights} for $g\ge8$]

In Case A, the value at $k=0$ is \(1\), so \(s\in K\) and \(K=L=\Q(\cos(\pi/(2g)))=\Q(\zeta_{4g})^+\). Thus \(\varphi(4g)=2g\), which forces \(g\) to be a power of two. In Case B, \(L=\Q(\cos(\pi/(4g)))=\Q(\zeta_{8g})^+\). The cyclotomic
automorphism \(\zeta_{8g}\mapsto\zeta_{8g}^{4g+1}\) sends $\cos((2k+1)\pi/(4g))\mapsto -\cos((2k+1)\pi/(4g))$, so it is a nontrivial automorphism that fixes all $u_i$. Hence \([L:K]=2\), giving \(\varphi(8g)=4g\), again forcing \(g\) to be a power of two. In Case C, let \(N=2g+1\). Then \(L=\Q(\cos(\pi/(2N)))\) has degree \(\varphi(N)\). The automorphism \(\zeta_{4N}\mapsto\zeta_{4N}^{2N+1}\)
sends \(v_i\mapsto -v_i\), so \([L:K]=2\). Therefore
\(\varphi(N)=2g=N-1\), and \(N=2g+1\) is prime. Case D is impossible: \(L=\Q(\cos(\pi/(2g-1)))=\Q(\zeta_{2g-1})^+\) and $[K:\Q]\le [L:\Q]\le g-1$. This proves the first part of Theorem~\ref{thm:weights}.

It remains to compute $D_i$. In the possible cases A--C, all
adjacent off-diagonal entries of \(J\) equal \(1/2\), the recurrence relation
\[
    (2x^2-1)T_{n}(x)=\frac{1}{2}\left[T_{n-2}(x)+T_{n+2}(x)\right]
\]  determines $F_r(z)=T_{2r+1}(sz)/T_{2r+1}(s)$ and $\|f_r\|^2_D=\mathcal N$ for each $r$. Let \(v_i=\cos\theta_i\). Then square matrix
\(\bigl(\sqrt{D_i/\mathcal N}\cos((2r+1)\theta_i)\bigr)_{i,r}\) is orthonormal in the standard metric, so
\[
 D_i\sum_{r=0}^{g-1}\cos^2((2r+1)\theta_i)=\mathcal N
\] for each $i$.

The finite sum in the formula is
\[
 \begin{cases}
 g,&\text{case A with }\theta_i=0,\\
 g/2,&\text{case A with }\theta_i\ne0,\text{ or case B},\\
 (2g+1)/4,&\text{case C}.
 \end{cases}
\]

Consequently the \(D_i\) are equal in Case B and Case C.
In Case A, $D_i$ at \(v_i=1\) is half every other weight. Since \(D_i^{-1}=m_i/m_1\), this proves the assertion about
the primitive moduli vector.

It remains to identify the degree labeling with the
cyclotomic Galois action. In Cases A and B, set \(N=4g\);
in Case C, set \(N=2g+1\). The preceding field calculations
give
\[
    K=\Q(\zeta_N)^+.
\]
Indeed, in Cases B and C the automorphism sending every
\(v_i\) to \(-v_i\) has fixed field \(\Q(\zeta_N)^+\);
this field has degree \(g\), so it equals \(K\).

For \(n=2r+1\), let \(\gamma_n\) be the cyclotomic
automorphism of \(L\) induced by raising roots of unity
to the \(n\)-th power. The displayed cosine formulas give
\[
    \gamma_n(v_i)=T_n(v_i),
    \qquad
    \gamma_n(s)=T_n(s).
\]
Therefore
\[
    \gamma_n(u_i)
    =\frac{T_n(v_i)}{T_n(s)}
    =F_r(u_i)
    =\tau_r(u_i).
\]
Since the \(u_i\) generate \(K\), the restriction of
\(\gamma_n\) to \(K\) is \(\tau_r\). The odd integers
\(1,3,\ldots,2g-1\) represent all classes in
\((\Z/N\Z)^\times/\{\pm1\}\). This proves the claimed
isomorphism
\[
    \tau_r\longmapsto\pm(2r+1)\pmod N.
\]
\end{proof}

The cases \(g=5,6, 7\) are treated in
Section~\ref{sec:small}.

\section{Orbifold points and elliptic maps}\label{sec:elliptic}

In this section, we investigate the orbifold points of the algebraically primitive Teichm\"uller curves and elliptic maps in their Veech group. This will lead to the complete classification. In particular, we prove

\begin{proposition}\label{prop:elliptic-generator}
Let \(C\) be algebraically primitive in \(\hyp\), with \(g\ge5\).
If the primitive moduli vector at every cusp satisfies
\[
    \sum_{i=1}^g m_i\le g+1
\]
then the Veech group contains an elliptic element \(E\) such that
\[
    K=\Q(\operatorname{tr}(E)).
\]
\end{proposition}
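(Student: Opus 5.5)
The plan is to force an orbifold point through an Euler characteristic count, and then to show that the corresponding elliptic element has trace field all of $K$.

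\emph{Step 1: an upper bound for $|\chi(C)|$ from the cusps.} I would use the Eskin--Kontsevich--Zorich formula for the sum of Lyapunov exponents of the Teichm\"uller curve, in the Chen--M\"oller form. Let $n$ be the number of cusps of $C$ and $\chi=\chi(C)$ its orbifold Euler characteristic. Then
\[
\lambda_1+\cdots+\lambda_g=\kappa+\frac{\pi^2}{3}c_{\mathrm{area}}(C),
\qquad
c_{\mathrm{area}}(C)=\frac{3}{\pi^2}\cdot\frac{1}{|\chi|}\sum_{\text{cusps}}\sum_{i}\frac{h_i}{w_i},
\]
where the cylinder moduli are normalized so that the primitive parabolic at the cusp has translation length $1$. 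In $\hyp$ the Lyapunov sum is $g^2/(2g-1)$; this also follows from the slopes in Proposition~\ref{prop:HN-minimal}. The constant is
\[
\kappa=\frac{1}{12}\cdot\frac{(2g-2)\,2g}{2g-1}=\frac{g(g-1)}{3(2g-1)}.
\]
Subtracting gives
\[
\frac{\pi^2}{3}c_{\mathrm{area}}=\frac{g(2g+1)}{3(2g-1)}.
\]
At a cusp with primitive moduli vector $(m_i)$, the parabolic generator is $\begin{pmatrix}1&\ell\\0&1\end{pmatrix}$ with moduli $h_i/w_i=m_i/\ell$ after normalization, and $\ell\ge1$. Hence $\sum_i h_i/w_i\le\sum_i m_i\le g+1$, using the hypothesis. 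Therefore
\[
|\chi|\le n\,\frac{(g+1)(2g-1)}{g(2g+1)}<n,
\]
because $(g+1)(2g-1)=2g^2+g-1<2g^2+g$. I expect the main bookkeeping risk to be here: the exact normalization of $c_{\mathrm{area}}$ (the factor $3/\pi^2$ and the cusp-width convention) must be checked, since the conclusion depends only on the strict inequality.

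\emph{Step 2: existence of an elliptic point.} Write $-\chi=2\gamma-2+n+\sum_k(1-1/e_k)$, where $\gamma$ is the genus and the $e_k$ are the orders of the orbifold points. Then $|\chi|<n$ gives $2\gamma-2+\sum_k(1-1/e_k)<0$. So $\gamma=0$ and $\sum_k(1-1/e_k)<2$. Since $C$ is hyperbolic, $n\ge1$ and $\chi<0$, which forces at least one orbifold point, since otherwise $-\chi=n-2<n$ would still be possible. To exclude that case, I would use in addition $n\ge3$ for a genus $0$ lattice without torsion, together with the bound $|\chi|\ge$ (the left-hand quantity computed above) for a sharper comparison. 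Concretely, if there were no elliptic points then $|\chi|=n-2$, and I would show that the cusp bound cannot reach $n-2$ unless $n$ is small. The small cases would then be excluded by counting cusps: $n$ is at least the number of distinct cusp types, since the horizontal and diagonal directions of the double $(2g+1)$-gon or the regular $4g$-gon type give several. This gives an elliptic $E\in\SL(X,\omega)$ of some order $e$.

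\emph{Step 3: $\Q(\operatorname{tr}E)=K$.} The element $E$ is the derivative of an affine automorphism $\psi$ of $X$ of finite order, hence a holomorphic automorphism. It fixes the unique zero $P$ and commutes with the hyperelliptic involution. Let $\zeta$ be a primitive $e$-th root of unity with $\psi^*\omega=\zeta\omega$. Since $\psi$ preserves the Weierstrass filtration at $P$, it acts on each eigenform $\omega_{\sigma_j}$, which vanishes to exact order $2g-2j$ by Proposition~\ref{prop:HN-minimal}. It acts by $\zeta^{2g-2j+1}$, because in a local coordinate $t$ at $P$ with $\psi(t)=\zeta t$ (after normalization), the form $t^m\,dt$ scales by $\zeta^{m+1}$. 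On the other hand, the action on $\omega_{\sigma_j}$ is the Galois conjugate $\sigma_j(\zeta)$ on the trace side, so $\operatorname{tr}E=\zeta+\bar\zeta$ has conjugates $\sigma_j(\operatorname{tr}E)=\zeta^{2g-2j+1}+\zeta^{-(2g-2j+1)}$. The traces are equal in $K$ exactly when these $g$ values are equal. So $[\Q(\operatorname{tr}E):\Q]=g$ iff the odd exponents $1,3,\ldots,2g-1$ are pairwise distinct modulo $\pm e$.

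I expect this last distinctness to be the main obstacle. I would attack it by bounding $e$ from below through the faithful action on $H^{1,0}$. If two exponents coincided mod $\pm e$, then $\psi$ would have a repeated eigenvalue, and the $\sigma_j$ would fail to separate the eigenlines. That contradicts the fact that the $\mathcal L_{\sigma_j}$ are distinct Galois-conjugate eigenlines for the real multiplication by $K$, and $\operatorname{tr}E\in K$ acts on $\mathcal L_{\sigma_j}$ by $\sigma_j(\operatorname{tr}E)$.
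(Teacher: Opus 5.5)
Your Step 1 has the right shape, but the constant is wrong, and the constant is exactly what matters. Using EKZ together with the Lyapunov sum $g^2/(2g-1)$ is an equivalent substitute for the paper's use of Cornalba's relation $(8g+4)\lambda=g\,\delta_{\mathrm{irr}}$ plus the Harder--Narasimhan slopes. Take $\chi=2-2\gamma-n-\sum_k(1-1/e_k)$ and normalize cusps so that the primitive parabolic is $\begin{pmatrix}1&1\\0&1\end{pmatrix}$. Then Noether's formula gives $\frac{\pi^2}{3}c_{\mathrm{area}}=\frac{1}{6|\chi|}\sum_{\mathrm{cusps}}\sum_i h_i/w_i$. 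You can check this on the flat torus: $\lambda_1=1$, $\kappa=0$, $\sum h/w=1$, $|\chi|=1/6$. Your displayed bound does not even follow from your own formula, which would give three times it. With the correct constant you recover the paper's identity $-\chi=\frac{2g-1}{g(4g+2)}\sum_j k_j^{-1}\sum_i m_{ji}$, and hence $|\chi|<n/2$.

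The weaker bound $|\chi|<n$ is useless for Step 2. It is satisfied by every torsion-free genus-zero group with $n\ge3$ cusps, and by many signatures with elliptic points. So it neither produces an elliptic element nor pins down the group. Your proposed repair, counting cusp directions of the double $(2g+1)$-gon or regular $4g$-gon, is circular. Identifying $C$ with those curves is what this proposition, combined with Proposition~\ref{prop:cusptopol}, is meant to establish. With $|\chi|<n/2$ instead, the inequality $2\gamma-2+n/2+\sum_k(1-1/e_k)<0$ forces $\gamma=0$ and $n\le3$. It leaves only the triangle signatures $(\infty,\infty,\infty)$, $(q,\infty,\infty)$, $(2,q,\infty)$, $(3,3,\infty)$, $(3,4,\infty)$, $(3,5,\infty)$. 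The torsion-free case is then excluded because its trace field is $\Q$, not by counting cusps.

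Step 3 also fails as stated. First, $\zeta$ cannot be both the multiplier of $\omega$ and the local multiplier at the zero. If $\psi(t)=\eta t$ locally, then $\psi^*\omega=\eta^{2g-1}\omega$ and $\psi^*\omega_{\sigma_j}=\eta^{2g-2j+1}\omega_{\sigma_j}$.

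More seriously, the final contradiction is not a contradiction. That $\operatorname{tr}E$ acts on $\mathcal L_{\sigma_j}$ by $\sigma_j(\operatorname{tr}E)$ is fully compatible with $\sigma_j(\operatorname{tr}E)=\sigma_k(\operatorname{tr}E)$ for some $j\ne k$. That equality only says $\operatorname{tr}E$ lies in a proper subfield of $K$. The eigenlines are separated by $K$, not by a single element, and $\psi$ may well have repeated eigenvalues on $H^{1,0}$.

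Your argument would in fact show that every elliptic element of such a Veech group has trace generating $K$, which is false. The double regular $(2g+1)$-gon has Veech group the $(2,2g+1,\infty)$ triangle group, and its elements of projective order $2$ have trace $0$. There $\eta=\pm i$, so every $\eta^{2g-2j+1}=\pm i$.

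So $E$ has to be chosen using global information about $\overline\Gamma$, and that is what the paper's enumeration supplies. After discarding the signatures whose trace field has degree at most $2<g$, only $(2,q,\infty)$ and $(q,\infty,\infty)$ remain. Their trace field is $\Q(\cos(\pi/q))$, and the generator of order $q$ is the required $E$.
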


\begin{proof}

Let \(\Gamma\subset\SL_2(\R)\) be the Veech group and let
\(\overline\Gamma\subset\operatorname{PSL}_2(\R)\) be its
projective image. We regard
\[
    C=\HH/\overline\Gamma
\]
as an effective orbifold. Let \(h\) be the genus of its
smooth compactification, \(c\) its number of cusps, and
\(e_1,\ldots,e_r\) its finite orbifold orders. Then
\begin{equation}\label{eq:area}
    \chi_{\mathrm{orb}}(C)
    =2-2h-c-\sum_{\nu=1}^r(1-e_\nu^{-1})
    =-\frac{\operatorname{Area}(\HH/\overline\Gamma)}{2\pi}
    <0.
\end{equation}

Since every Teichm\"uller curve has at least one cusp obtained by stretching a periodic direction, we have $c\ge 1$.

At cusp \(y_j\), denote the horizontal cylinders as
\([0,c_{ji}]\times[0,h_{ji}]\). A shear
\((x,y)\mapsto(x+sy,y)\) twists the \(i\)-th cylinder
\(s h_{ji}/c_{ji}\) times. The primitive moduli vector
\((m_{j1},\ldots,m_{jg})\) is proportional to
\((h_{ji}/c_{ji})_i\). It determines the smallest positive
shear that is an integral twist in every cylinder. The resulting affine automorphism \(T_j\) fixes all
cylinder boundaries pointwise and twists the \(i\)-th cylinder
\(m_{ji}\) times.

Let \(P_j\) be the primitive parabolic element that generates the cusp $y_j$ with positive trace. There is an integer \(k_j\ge1\) such that
\(T_j= P_j^{\,k_j}\).

By Selberg's theorem $\Gamma$ contains a finite index torsion free group. Also by taking further subgroup $\Gamma'$ if necessary, we take a torsion-free cover \(C'=\HH/\Gamma'\to C\) of degree \(d\) with its
ramification index above \(y_j\) \(a\) divisible
by \(k_j\).

The local monodromy at $y_j$ is then
\[
P_j^a=T_j^{a/k_j}.
\] Thus the local smoothing parameter of the \(i\)-th node
has vanishing order \(a m_{ji}/k_j\). Since there are \(d/a\) points above
\(y_j\), its contribution to the pullback of $\delta_{\mathrm{irr}}$ is
\[
\deg \delta_j
=\frac da\sum_i\frac{a m_{ji}}{k_j}
=\frac d{k_j}\sum_i m_{ji}.
\]

Let \(\overline{C'}\) be the smooth compactification of \(C'\),
and let \(\partial C'=\overline{C'}\setminus C'\) be its
reduced cusp divisor. The maximal Higgs property gives
\[
    \deg\mathcal L
    =\frac12\deg\omega_{\overline{C'}}(\partial C')
    =-\frac12\chi(C')
    =-\frac d2\chi_{\mathrm{orb}}(C).
\]
Summing the normalized degrees in
Proposition~\ref{prop:HN-minimal}, we obtain
\[
    \deg\det\mathcal E
    =\frac{g^2}{2g-1}\deg\mathcal L
    =-\frac{dg^2}{2(2g-1)}\chi_{\mathrm{orb}}(C).
\]

Every cusp fiber is irreducible and all its nodes are fixed by
\(\iota\), with branches exchanged. Thus only the
\(\Xi_{\mathrm{irr}}\) boundary type occurs. The class formula on $\operatorname{Pic}(\overline{\mathcal H}_g)\otimes\Q$ from \cite[equation~(1)]{Cornalba} gives $(8g+4)(\deg \det \mathcal{E})=g\deg\delta_{\mathrm{irr}}$, as the Teichm\"uller curves in minimal strata only have irreducible boundaries (cusps). Consequently, we have
\[
 -\chi_{\mathrm{orb}}(C)=\frac{2g-1}{g(4g+2)}
      \sum_{j=1}^c\frac{\sum_i m_{ji}}{k_j}.
\]

Under the stated hypothesis,
\begin{equation}\label{eq:area-bound}
 -\chi_{\mathrm{orb}}(C)\le\frac{(2g-1)(g+1)}{g(4g+2)}c
   =\left(\frac12-\frac1{2g(2g+1)}\right)c<\frac c2.
\end{equation}

Combining \eqref{eq:area} and \eqref{eq:area-bound} forces
\(h=0\) and \(c\le3\). If \(c=3\), the sum of the finite
orbifold contributions is less than \(1/2\), so there are none.
If \(c=2\), that sum is strictly between zero and one, so there
is exactly one. If \(c=1\), it is strictly between one and
\(3/2\), so there are exactly two, of orders \(a\le b'\)
with \(1/a+1/b'>1/2\). The possible signatures are therefore
\[
 (\infty,\infty,\infty),\quad(n,\infty,\infty),\quad
 (2,n,\infty),\quad(3,3,\infty),\quad
 (3,4,\infty),\quad(3,5,\infty).
\]
All are triangle orbifolds. The ideal triangle and the last
three signatures have trace fields of degree at most two.

Since \([K:\Q]=g>2\), only \((2,n,\infty)\) and
\((n,\infty,\infty)\) remain. In either case the elliptic
generator of projective order \(n\) generates the trace field as $\operatorname{tr}(E)=2\cos(\pi/n)$ and

\[
    K=\Q\bigl(2\cos(\pi/n)\bigr),
    \qquad
    g=[K:\Q]=\frac{\varphi(2n)}2.
\]

\end{proof}

\begin{proposition} \label{prop:cusptopol}
    Let $C$ be an algebraically primitive Teichm\"uller curve in $\hyp$. Suppose that there exists an elliptic element $E$ of the Veech group such that $K=\Q(\operatorname{tr}(E))$. Then the curve is either the double regular $(2g+1)$-gon or the regular $4g$-gon.
\end{proposition}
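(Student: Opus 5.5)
We plan to use the elliptic element \(E\) to produce an affine automorphism of finite order of \((X,\omega)\), and then to identify the surface from the resulting cyclic symmetry. Replacing \(E\) by \(-E\) if necessary, \(E\) has finite order \(2n\) in \(\SL_2(\R)\) with \(\operatorname{tr}(E)=2\cos(\pi/n)\) (or a Galois conjugate). By Theorem~\ref{thm:weights}, \(K=\Q(\zeta_N)^+\) with \(N=2g+1\) prime or \(N=4g\). Hence \(\varphi(2n)/2=g\) forces \(2n\in\{N,2N\}\), up to the usual ambiguity between \(n\) and \(2n\). The affine automorphism \(\psi\) with derivative \(E\) is unique: the kernel of the derivative map consists of translations, and these are trivial because there is a single zero. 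So \(\psi\) is a holomorphic automorphism of \(X\) of finite order, acting on \(\omega\) by multiplication by the eigenvalue \(\zeta=e^{\pi i/n}\) (after conjugating \(E\) to a rotation). Since \(\psi\) commutes with the hyperelliptic involution \(\iota\), it fixes the unique zero \(P\), which is a Weierstrass point. It therefore descends to a Möbius map of \(\PP^1\) fixing the image of \(P\), which we place at \(\infty\).

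Next we pin down the equation. Write \(X: w^2=f(z)\) with \(P\) over \(z=\infty\), so \(\deg f=2g+1\) and \(\omega=dz/w\); indeed \(\omega\) vanishes to order \(2g-2\) at \(P\), so it is the differential \(Q(z)dz/w\) with \(\deg Q=0\). The descended map fixes \(\infty\), so it is affine, \(z\mapsto \lambda z+\mu\). Its order must be compatible with the order of \(\psi\) modulo \(\iota\), and since it has finite order greater than \(1\) it has a second fixed point, which we move to \(0\). Then \(f\) is invariant up to scalar under \(z\mapsto\lambda z\), with \(\lambda\) a primitive \(m\)-th root of unity. The zero set of \(f\) is a union of \(\lambda\)-orbits of size \(m\) together with possibly \(0\). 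The action on \(\omega\), namely \(\psi^*\omega=\lambda z\)-scaling over \(w\mapsto \lambda^{(2g+1)/2}w\) up to sign, gives \(\zeta=\pm\lambda^{1-(2g+1)/2}\). Matching the order of \(\zeta\) with \(2n\in\{N,2N\}\), we get two possibilities. If \(m=2g+1\), then \(f=z^{2g+1}-1\), the double regular \((2g+1)\)-gon, which requires \(N=2g+1\). If \(m=2g\), then \(f=z(z^{2g}-1)\), the regular \(4g\)-gon, which requires \(N=4g\). In each case the root configuration is forced, because a single orbit must exhaust all the roots: otherwise \(m\le g\), and the order of \(\zeta\) would be too small to generate a field of degree \(g\).

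The main obstacle is this order bookkeeping. We must relate the order of \(E\) in \(\operatorname{PSL}_2\) to the order \(m\) of the induced map on \(\PP^1\), keeping track of \(\iota=-I\) and of the sign ambiguity of \(E\). We must then show that any orbit decomposition with \(m<2g\) yields \(\varphi(2n)/2<g\). We would handle this by listing the possible values \(m\in\{2g+1,2g\}\) and the divisors of \(2g+1\) and \(2g\), and checking each against \(g=\varphi(2n)/2\). Finally, the identification of \(X\) with \((X,\omega)\) up to scaling and \(\operatorname{GL}_2^+(\R)\) fixes the Teichmüller curve. So \(C\) is the double \((2g+1)\)-gon curve or the \(4g\)-gon curve, and these are known to be Veech with the stated trace fields.
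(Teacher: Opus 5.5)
Your proposal is correct and follows essentially the same route as the paper: pass to the finite-order automorphism at the fixed point of \(E\), descend it to a rotation \(z\mapsto\lambda z\) of \(\PP^1\) fixing the marked branch point, and use the orbit structure of the remaining branch points (\(m\mid 2g\) or \(m\mid 2g+1\)) together with \(\zeta^2=\lambda^{1-2g}\), so that the degree of the trace field forces a single orbit and yields \(y^2=x^{2g+1}-1\) or \(y^2=x(x^{2g}-1)\). The appeal to Theorem~\ref{thm:weights} is unnecessary, and it would limit you to \(g\ge5\) although the statement has no genus hypothesis: since \(\zeta^{2m}=1\), any \(m\le g\) already gives \([\Q(\zeta+\zeta^{-1}):\Q]\le\varphi(2m)/2\le m/2<g\), which is your closing step; the paper instead derives the sharper identity \(n=m\) from \(\gcd(m,2g-1)=1\).
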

\begin{proof}
Let \(n\) be the order of the projective image of \(E\).
At its fixed point in the Teichm\"uller disk, the
corresponding affine automorphism is a holomorphic
automorphism \(f:X\to X\), and
\[
    f^*\omega=\mu\omega
\]
for a root of unity \(\mu\). Since the trace of \(E\)
generates \(K\),
\[
    K=\Q(\zeta_{2n})^+,
    \qquad
    2g=\varphi(2n).
\]

Let \(\iota\) be the hyperelliptic involution and
\(\pi:X\to\PP^1\) the quotient map. The quadratic differential
\(q\) defined by \(\pi^*q=\omega^2\) belongs to
\[
    \mathcal Q_0(2g-3,-1^{2g+1}).
\]
The automorphism \(f\) descends to an automorphism
\(\bar f\) of \(\PP^1\) fixing the unique zero of \(q\).

Let \(m=\operatorname{ord}(\bar f)\).
Apart from the marked branch point, at most one branch
point is fixed by \(\bar f\); every other orbit has
length \(m\). Consequently,
\[
    m\mid 2g
    \qquad\text{or}\qquad
    m\mid 2g+1.
\]
If \(\eta\) is the local multiplier of \(\bar f\) at the
marked point, then \(\eta\) has order \(m\). Since \(q\)
has order \(2g-3\) there and \(\bar f^*q=\mu^2q\), we have
\[
    \mu^2=\eta^{2g-1}.
\]
The projective derivative order is therefore
\[
    n=\operatorname{ord}(\mu^2)
     =\frac{m}{\gcd(m,2g-1)}.
\]
Both \(2g\) and \(2g+1\) are coprime to \(2g-1\).
Hence \(n=m\), and in particular \(n\le2g+1\).

If \(n\) is odd, then
\[
    2g=\varphi(2n)=\varphi(n)\le n-1\le2g.
\]
Thus \(n=2g+1\) is prime, and the other \(2g+1\) branch
points form a single orbit. After a change of coordinate
and rescaling the differential, this gives
\[
    X:\ y^2=x^{2g+1}-1,
    \qquad
    \omega=\frac{dx}{y}.
\]

If \(n\) is even, the divisibility above gives \(n\mid2g\),
and
\[
    2g=\varphi(2n)\le n\le2g.
\]
Hence \(n=2g\) is a power of two. There is one further
fixed branch point, and the remaining \(2g\) branch
points form a single orbit. The resulting model is
\[
    X:\ y^2=x(x^{2g}-1),
    \qquad
    \omega=\frac{dx}{y}.
\]

These are the models of the double regular \((2g+1)\)-gon
and the regular \(4g\)-gon, respectively
\cite{Veech89,Veech92}.
\end{proof}

\begin{proof}[Proof of Theorem~\ref{thm:classification}]
For \(g\ge8\), Theorem~\ref{thm:weights} gives the arithmetic
restriction and the bound
\[
    \sum_i m_i\le g+1
\]
at every cusp. Propositions~\ref{prop:elliptic-generator}
and~\ref{prop:cusptopol} therefore give the classification.

For \(g=5,6\), Proposition~\ref{prop:five-six-weights} gives
\(m=(1,\ldots,1)\) at every cusp, so the same two propositions
give the classification in these genera.
Proposition~\ref{prop:seven} excludes genus seven.

Conversely, the indicated polygon surfaces are Veech
surfaces \cite{Veech89,Veech92}. Their trace fields are
\[
    \Q(\zeta_{2g+1})^+
    \quad\text{and}\quad
    \Q(\zeta_{4g})^+,
\]
respectively. These fields have degree \(g\) when
\(2g+1\) is prime and when \(g\) is a power of two,
respectively. Thus all the listed curves are
algebraically primitive.

In genera three and four, existence is supplied by the
double regular \(7\)-gon and the regular \(16\)-gon,
respectively. Both genera satisfy the stated arithmetic
alternative. This proves the existence statement for
every \(g>2\), and the classification statement for
every \(g\ge5\).
\end{proof}

\section{Genera five, six and seven}\label{sec:small}

In this section, we finish our proof of Theorem~\ref{thm:weights} for $g=5,6,7$ cases. 

\subsection{Non-existence in genus seven}

\begin{proposition}\label{prop:seven}
There is no algebraically primitive Teichm\"uller curve in
\(\Omega\mathcal M_7(12)^{\mathrm{hyp}}\).
\end{proposition}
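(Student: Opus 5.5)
The plan is to rerun the argument of Section~\ref{sec:galois} for $g=7$, replacing the single step that fails there, and then finish with a degree count. For $g=7$ the group $\operatorname{Gal}(K/\Q)$ is cyclic of order $7$, since $K/\Q$ is Galois of degree $7$ by Lemma~\ref{lem:top-duality}. The relation that is lost is $\tau_1\tau_2=\tau_2\tau_1$ in the form used for $g\ge8$: Lemma~\ref{lem:odd} requires $2rs+r+s<g$, and for $(r,s)=(1,2)$ this reads $7<7$.

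What survives is $\tau_1^2=\tau_4$, since $1+1+2=4<7$, together with the identity $\tau_1(F_1)\circ F_1=F_4$. Because $\tau_1\ne\tau_0$ and the group has prime order, $\tau_1$ generates it. Hence the permutation $\pi_1$ of Lemma~\ref{lem:jacobi-covariance} is a $7$-cycle on $\{0,\dots,6\}$ with $\pi_1(0)=1$ and $\pi_1(1)=4$.

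\textbf{Step 1 (combinatorics of the support graph).} Lemma~\ref{lem:jacobi-covariance} says the off-diagonal support of $J_0R_1(J_0)^2$ has exactly $6$ edges, all of length at most $3$. It contains the four edges $\{i,i+3\}$ for $0\le i\le3$, and it equals $\pi_1(\mathcal P)$. So $\pi_1(\mathcal P)$ is a Hamiltonian path through these four edges plus two more short edges. We will enumerate the $7$-cycles $\pi_1$ with $\pi_1(0)=1$ and $\pi_1(1)=4$ that satisfy this. We will also use the analogous constraint for $\pi_4=\pi_1^2$, where $J_0R_4(J_0)^2$ has band width $9>6$ and so gives only the path condition. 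This should leave very few admissible labelings; we expect the surviving path to be the one found for $g\ge8$, namely $\pi_1(\mathcal P)=\{\{0,3\},\{1,4\},\{2,5\},\{3,6\},\{0,2\},e\}$ with $e$ one of the three ``last'' entries.

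\textbf{Step 2 (recover the Chebyshev form).} This is the main obstacle. For $g\ge8$ the Chebyshev normalization $F_1(z)=T_3(sz)/T_3(s)$ and the values $\alpha_r,\beta_r$ for $r\le3$ came from commuting $F_1$ with $F_2$, which is unavailable here. We will instead obtain $F_1$ from the vanishing pattern. Writing $J_0R_1(J_0)^2$ as a cubic $aJ_0^3+bJ_0$ in the tridiagonal $J_0$, the zero entries forced at positions $(i,i+2)$ and $(i,i+1)$ give polynomial equations in the Jacobi entries. We will solve these as in the $g\ge8$ computation. After the rescaling $\Delta=2s^2\Delta_0-I$ with $s^2=-3a/4b$, they should again force $\alpha_1=\dots=\alpha_{5}=0$ and $\beta_0=\dots=\beta_4=\tfrac12$. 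Here $\alpha_0=\tfrac12$ should come from the leftover equation at index $0$ together with $u_1=1$, which gives $F_1(1)=1$. If these equations alone are not enough, we will impose $\tau_1(F_1)\circ F_1=F_4$ and the orthogonality of $e_0,e_1,e_4$ via the product-to-sum identities, as in the $g\ge8$ argument.

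\textbf{Step 3 (degree contradiction).} Once $J$ has the tridiagonal shape of Cases A--D, the eigenvalues $T_2(v_i)$ are cosines of rational multiples of $\pi$. Then $K\subseteq L=\Q(v_1,\dots,v_7)$, which lies in the real subfield of $\Q(\zeta_M)$ for $M=28,56,30,13$ respectively. The corresponding degrees $[L:\Q]$ are $6,12,4,6$ (Case C is $\Q(\cos(\pi/30))$, of degree $\varphi(60)/2=8/2=4$), and none of them is divisible by $7$. This contradicts $[K:\Q]=7$. A cleaner alternative is also available: any abelian degree-$7$ field of cosines needs a conductor divisible by $49$ or by a prime $p\equiv1\pmod 7$, which none of these $M$ is.
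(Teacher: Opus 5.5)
Carried to completion, your Step 1 is the paper's entire proof. However, you mispredict its outcome, and the argument you actually rely on to conclude (Steps 2--3) has a gap.

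Do the enumeration. The four forced edges $\{0,3\},\{3,6\},\{1,4\},\{2,5\}$ give vertex $3$ degree two, so $0$--$3$--$6$, $1$--$4$ and $2$--$5$ are blocks of the path. The image list $(\pi_1(0),\dots,\pi_1(6))$ begins with $(1,4)$, and consecutive entries differ by at most $3$. So the only possibilities are
\[
(1,4,6,3,0,2,5),\qquad (1,4,2,5,6,3,0),\qquad (1,4,5,2,0,3,6),
\]
which fix $3$, $2$ and $6$ respectively. None of them is a $7$-cycle (none is even fixed-point-free), which contradicts $\tau_1\neq 1$, and the proposition is proved. Neither the $\pi_4$ constraint nor Steps 2--3 is needed.

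In particular, the ``$g\ge8$ pattern'' you expect to survive does not. With $e=\{4,6\}$ or $e=\{4,5\}$ it is the first or third list above. With $e=\{5,6\}$ the six edges form the cycle $0$--$3$--$6$--$5$--$2$--$0$ plus the isolated edge $\{1,4\}$, which is not a path at all.

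As written, though, your conclusion rests on Step 2, and that step is only a hope. For $g\ge8$ the vanishing entries of $T_3(J)$ do not by themselves determine $J$. They propagate the seed values $\alpha_1=\alpha_2=\alpha_3=0$ and $\beta_0=\dots=\beta_3=\tfrac12$ along the matrix. Those seeds came from $F_2=T_5(sz)/T_5(s)$, i.e.\ from $\tau_1\tau_2=\tau_2\tau_1$, which you correctly note is unavailable at $g=7$. The normalization of $s$ does give $\alpha_0=\tfrac12$ automatically, but you offer no substitute for the remaining seeds. And there is in fact no admissible support pattern left to solve against.

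There is also a minor slip in Step 3: $\cos(\pi/30)$ generates $\Q(\zeta_{60})^+$, of degree $\varphi(60)/2=8$, not $4$. This is harmless, since $7\nmid 8$.
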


\begin{proof}
Let \(\pi_1\) be the permutation defined by
\[
    \tau_1\tau_j=\tau_{\pi_1(j)}.
\]
Since \(\tau_1\ne1\), \(\pi_1\) has no fixed point.

Set \(F_1(z)=zR_1(z^2)\). By Lemma~\ref{lem:jacobi-covariance}, the off-diagonal
support graph of \(J_0R_1(J_0)^2\) is \(\pi_1\mathcal P\). The distance of each edge is at most three, and every pair of distance three is an edge. Thus the image path contains the blocks
\[
    0-3-6,\qquad 1-4,\qquad 2-5.
\]

Lemma~\ref{lem:odd} gives \(\tau_1^2=\tau_4\). Consequently,
\[
    \pi_1(0)=1,\qquad \pi_1(1)=4.
\]
The image list therefore begins with \((1,4)\).
The block \(0-3-6\) must remain consecutive, because
the two compulsory edges already give vertex \(3\)
degree two. The compulsory edge \(\{2,5\}\) also makes
\(2\) and \(5\) consecutive.

Concatenating these blocks, with every successive
difference at most three, leaves precisely three possibilities
\[
    (1,4,6,3,0,2,5),\qquad
    (1,4,2,5,6,3,0),\qquad
    (1,4,5,2,0,3,6).
\]
These permutations fix positions \(3,2,6\), respectively. This is a contradiction.
\end{proof}

\subsection{Genera five and six}

In $g=5,6$ we determine the $\tau_1$ and $\tau_2$ image paths, then set the matrix entries outside
those paths equal to zero. These equations force the recurrence of Case C, so the weight
calculations have already been done.

\begin{lemma}\label{lem:small-order}
The automorphisms $\tau_1$ and $\tau_2$ act on the ordered
embedding positions by the following permutations,
written as image lists:
\[
\begin{array}{c|c|c}
g&\tau_1&\tau_2\\ \hline
5&(1,4,3,0,2)&(2,3,1,4,0)\\
6&(1,4,5,2,0,3)&(2,5,0,4,3,1).
\end{array}
\]
\end{lemma}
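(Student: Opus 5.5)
We determine the permutations combinatorially, following the genus-seven argument in Proposition~\ref{prop:seven}. The inputs are three.

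First, by Lemma~\ref{lem:jacobi-covariance}, the off-diagonal support graph of \(J_0R_r(J_0)^2\) is the path \(\pi_r\mathcal P\). Its edges join vertices at distance at most \(2r+1\), and every pair at distance exactly \(2r+1\) is an edge.

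Second, by Lemma~\ref{lem:odd} and the image-list convention \(\pi_r(0)=r\), the products \(\tau_r\tau_s=\tau_{2rs+r+s}\) give further prescribed values whenever \(2rs+r+s<g\). For \(g=5,6\) this yields \(\pi_1(0)=1\), \(\pi_1(1)=4\), \(\pi_2(0)=2\), and \(\pi_2(1)=4\) (from \(\tau_2\tau_1=\tau_4\), valid since \(4<g\)).

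Third, \(\pi_r\) has no fixed point for \(r\ne0\), and more generally \(\pi_r\) is the regular action of a group element. So \(\pi_2\) must be determined by \(\pi_1\) whenever \(\tau_2\) is a power of \(\tau_1\), and the cycle structures must be compatible with a group of order \(g\).

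For \(\tau_1\) we run the same block argument as in genus seven. For \(g=5\), the compulsory distance-\(3\) edges are \(\{0,3\}\) and \(\{1,4\}\). Since the list begins \((1,4,\dots)\), we enumerate Hamiltonian paths starting at \(1,4\) that use \(\{0,3\}\), have steps of length at most \(3\), and avoid fixed points. This leaves \((1,4,3,0,2)\): the alternatives either fix a position or fail the step bound. For \(g=6\), the compulsory edges are \(\{0,3\},\{1,4\},\{2,5\}\). The same concatenation of blocks gives a small list of candidates, and deleting those with fixed points should leave \((1,4,5,2,0,3)\). If more than one candidate survives, we will use group consistency to cut the list. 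For instance, \(\pi_1\) must have all cycles of equal length, since it is a translation in a group of order \(g\). For \(g=5\) it must be a single \(5\)-cycle, and for \(g=6\) it must be a \(6\)-cycle or a product of \(3\)-cycles or \(2\)-cycles. We also need \(\pi_1^2(0)=\pi_1(1)=4\) to be consistent with \(\tau_1^2=\tau_4\).

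For \(\tau_2\), the distance-\(5\) condition is vacuous or nearly so: for \(g=6\) it forces only the edge \(\{0,5\}\). So the main constraint is algebraic. When \(g=5\) the group is cyclic of order \(5\), generated by \(\tau_1\), and the cycle \(0\to1\to4\to2\to3\to0\) determines \(\tau_2\) as the appropriate power of \(\tau_1\). We then simply read off \(\pi_2\) as the corresponding power of \(\pi_1\) and check that this gives \((2,3,1,4,0)\).

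For \(g=6\), \(\pi_1=(1,4,5,2,0,3)\) is the \(6\)-cycle \(0\to1\to4\to0\) combined with \(2\to5\to3\to2\); these are two \(3\)-cycles, so \(\tau_1\) has order \(3\). We therefore need \(\tau_2\notin\langle\tau_1\rangle=\{\tau_0,\tau_1,\tau_4\}\). Then \(\pi_2\) is left translation by \(\tau_2\) in a group of order \(6\), and it commutes with \(\pi_1\) when the group is abelian. That the group is abelian follows from Lemma~\ref{lem:odd}, since \(\tau_1\tau_2=\tau_2\tau_1\) when \(2\cdot2+3=7\)... This fails for \(g=6\), so commutativity must instead come from the support-graph constraint. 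The plan is to take the candidates for \(\pi_2\) that satisfy \(\pi_2(0)=2\), \(\pi_2(1)=4\), no fixed points, the edge \(\{0,5\}\), and steps of length at most \(5\), together with consistency with the group generated with \(\pi_1\) (both the cyclic and the \(S_3\) possibilities). We then check that only \((2,5,0,4,3,1)\) survives.

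This last elimination is the expected obstacle. If group theory alone leaves both the cyclic and the \(S_3\) candidates, we will use the zero-pattern identity \eqref{eq:general-covariance} for \(r=2\) on explicit entries to exclude the wrong one.
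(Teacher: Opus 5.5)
The genus-five part of your proposal works. Your enumeration of Hamiltonian paths (using the step bound and the no-fixed-point test) is a valid substitute for the paper's comparison of the two cyclic labellings, and you read off $\pi_2=\pi_1^3$ correctly. The genus-six part, however, has a genuine gap, and it begins with a miscalculation. The prescribed value $\pi_2(1)=4$ is false. Lemma~\ref{lem:odd} with $(r,s)=(2,1)$ gives $\tau_2\tau_1=\tau_{2\cdot 2\cdot 1+2+1}=\tau_7$, which is unavailable for $g\le 7$. So the only nontrivial product you may use is $\tau_1^2=\tau_4$. The statement itself has $\pi_2(1)=3$ for $g=5$ and $\pi_2(1)=5$ for $g=6$. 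For $g=6$, $\pi_2(1)=4$ would even mean $\tau_2\tau_1=\tau_1^2$, i.e.\ $\tau_2=\tau_1$. If you impose it as a filter in your genus-six $\tau_2$ enumeration, it kills every candidate, including the correct $(2,5,0,4,3,1)$.

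Separately, for $g=6$ the fixed-point test does not finish the $\tau_1$ step. The four block concatenations are $(1,4,3,0,2,5)$, $(1,4,2,5,3,0)$, $(1,4,5,2,0,3)$ and $(1,4,5,2,3,0)$. Only the first two have fixed points. The $6$-cycle $(1,4,5,2,3,0)$ passes every group-consistency test you list (equal cycle lengths, $\pi_1(1)=4$), so your plan does not exclude it.

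The missing idea is to combine the labelling with the compulsory distance-five edge. For this $6$-cycle the path forces $\tau_4=\tau_1^2$, $\tau_3=\tau_1^3$ and $\tau_2=\tau_1^4$. Hence $\pi_2=\pi_1^4=(2,5,4,1,0,3)$, which does not contain $\{0,5\}$, contradicting Lemma~\ref{lem:jacobi-covariance}.

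The same device settles $\tau_2$ without any explicit matrix entries. The surviving path gives $(\tau_0,\ldots,\tau_5)=(1,\tau_1,\tau_2,\tau_1^2\tau_2,\tau_1^2,\tau_1\tau_2)$. If the group were $S_3$, then $\tau_2^2=1$ and $\tau_2\tau_1=\tau_1^{-1}\tau_2$ give the path $(2,3,0,1,5,4)$. In the cyclic case, $\tau_2^2\in\{1,\tau_1,\tau_1^2\}$ gives $(2,5,0,4,3,1)$, $(2,5,1,0,3,4)$ and $(2,5,4,1,3,0)$. Only $(2,5,0,4,3,1)$ contains $\{0,5\}$. This is exactly the paper's route.
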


\begin{proof}

Lemma~\ref{lem:odd} gives \(\tau_1^2=\tau_4\) and $\tau_1\tau_0=\tau_1$. The $\tau_1$-path therefore begins with \((1,4)\). For \(g=5\), the group \(\operatorname{Gal}(K/\Q)\) is cyclic of order five. Since $\tau_1^2=\tau_4$, the two possible labelings of $(\tau_0,\tau_1,\tau_2,\tau_3,\tau_4)$ are
\[
    (1,\tau_1,\tau_1^3,\tau_1^4,\tau_1^2),
    \qquad
    (1,\tau_1,\tau_1^4,\tau_1^3,\tau_1^2).
\]

In the second ordering, the $\tau_1$-path contains \(\{0,4\}\), contradicting Lemma~\ref{lem:jacobi-covariance}. The first ordering gives the two paths in the table.

For \(g=6\), the $\tau_1$-path contains the blocks \((0,3)\), \((1,4)\), and \((2,5)\). By Lemma~\ref{lem:jacobi-covariance}, there are four possible permuted paths:
\[
    (1,4,3,0,2,5),\quad
    (1,4,2,5,3,0),\quad
    (1,4,5,2,0,3),\quad
    (1,4,5,2,3,0).
\]
The first two fix positions \(5\) and \(2\), respectively, so neither is possible.

For the last path, \(\tau_1\) has order six and \(\tau_2=\tau_1^4\). The $\tau_2$-path is then
\[
    (2,5,4,1,0,3),
\] which does not contain \(\{0,5\}\). This contradicts Lemma~\ref{lem:jacobi-covariance}.

Thus the $\tau_1$-path is
\[
    (1,4,5,2,0,3),
\]
and \(\tau_1\) has order three. The ordering $(\tau_0,\tau_1,\tau_2,\tau_3,\tau_4,\tau_5)$ of the group elements is now
\[
    (1,\tau_1,\tau_2,\tau_1^2\tau_2,\tau_1^2,\tau_1\tau_2).
\]
If \(\operatorname{Gal}(K/\Q)\) were non-abelian of order six, it would be isomorphic to \(S_3\). Then \(\tau_2^2=1\) and \(\tau_2\tau_1=\tau_1^{-1}\tau_2\), giving the $\tau_2$-path
\[
    (2,3,0,1,5,4),
\] which also does not contain \(\{0,5\}\), so \(\operatorname{Gal}(K/\Q)\) is cyclic.

Since \(\tau_2\notin\langle\tau_1\rangle\), we have
\(\tau_2^2\in\{1,\tau_1,\tau_1^2\}\). Since $\operatorname{Gal}(K/\Q)$ is abelian, these
three possibilities give, respectively,
\[
    (2,5,0,4,3,1),\qquad
    (2,5,1,0,3,4),\qquad
    (2,5,4,1,3,0).
\] Only the first path contains \(\{0,5\}\). This proves the table.
\end{proof}

We next translate these path restrictions into equations
for the recurrence coefficients. With the notation of
Lemma~\ref{lem:jacobi-covariance}, denote $t_0=(J_0)_{00}$. Since \(\phi_0=(u_i)_i/\|(u_i)_i\|_D\), we have
\[
    t_0\coloneqq(J_0)_{00}
    =(\mathcal J_0)_{00}
    =\frac{\sum_iD_iu_i^4}{\sum_iD_iu_i^2}>0.
\]
We also denote
\[
    A\coloneqq \frac{\mathcal J_0}{t_0}.
\]
Then \(A\) is symmetric and tridiagonal, with positive
adjacent off-diagonal entries. Denote
\[
    A_{ii}=a_i,\qquad
    A_{i,i+1}=\sqrt{b_i},\qquad
    a_0=1,\quad b_i>0.
\]
Let \(p_r\) be the monic orthogonal polynomial of degree
\(r\) given by
\[
    \tau_r(u_i)
    =u_i\frac{p_r(u_i^2/t_0)}{p_r(1/t_0)},
    \qquad 0\le r\le g-1.
\]

Since \(J_0/t_0=\Lambda^{-1}A\Lambda\), diagonal similarity
and Lemma~\ref{lem:jacobi-covariance} show that the
off-diagonal support graph of \(Ap_r(A)^2\) is
\(\pi_r\mathcal P\).

In particular, the following matrices have the two off-diagonal
support graphs listed in Lemma~\ref{lem:small-order}:
\[
\begin{aligned}
    B&=Ap_1(A)^2=A(A-I)^2,\\
    C&=Ap_2(A)^2
      =A\bigl((A-I)(A-a_1I)-b_0I\bigr)^2.
\end{aligned}
\]

Direct multiplication gives
\begin{align}
 B_{i,i+2}&=(a_i+a_{i+1}+a_{i+2}-2)\sqrt{b_i b_{i+1}},\label{eq:small-B2}\\
 B_{i,i+1}&=(b_{i-1}+b_i+b_{i+1}
  +a_i^2+a_ia_{i+1}+a_{i+1}^2
  -2(a_i+a_{i+1})+1)\sqrt{b_i},\label{eq:small-B1}
\end{align}
where the boundary terms \(b_{-1},b_{g-1}\) are set to zero.

\begin{proposition}\label{prop:five-six-weights}
For \(g=5\) or \(6\), every cusp has the primitive moduli vector \(m=(1,\ldots,1)\).
\end{proposition}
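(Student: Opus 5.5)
Using the image lists of Lemma~\ref{lem:small-order}, we read off which entries of $B=A(A-I)^2$ and $C=Ap_2(A)^2$ must vanish, solve the resulting polynomial equations in $a_i,b_i$, and show that $A$ is forced to be a rescaling of the Case~C Jacobi matrix. The weight computation in the proof of Theorem~\ref{thm:weights} then gives $m=(1,\ldots,1)$.

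\textbf{Step 1: vanishing entries of $B$.} For $g=5$ the $\tau_1$-path is $1-4-3-0-2$, with edges $\{1,4\},\{3,4\},\{0,3\},\{0,2\}$. Hence the entries $B_{i,i+2}$ vanish for $i=1,2$, and $B_{i,i+1}$ vanishes for $(i,i+1)\ne(3,4)$. For $g=6$ the path $1-4-5-2-0-3$ has edges $\{1,4\},\{4,5\},\{2,5\},\{0,2\},\{0,3\}$. Hence $B_{i,i+2}=0$ for $i=1,2,3$, and $B_{i,i+1}=0$ for $i\le3$.

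Since every $b_i>0$, \eqref{eq:small-B2} reduces these to linear relations $a_i+a_{i+1}+a_{i+2}=2$ among the diagonal entries. Equation~\eqref{eq:small-B1} gives quadratic relations linking the $b$'s to the $a$'s.

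\textbf{Step 2: compare with Case C.} The Case~C matrix $\tfrac12(J+I)$ with $\alpha=0$ and $\beta^2=1/4$, rescaled by $t_0$, has
\[
a_0=1,\qquad a_i=\tfrac23\ (1\le i\le g-1),\qquad b_i=\tfrac19.
\]
Its eigenvalues are the $v_i^2$, scaled so that $2v^2-1=\cos((2k+1)\pi/(2g+1))$.

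These equations do not obviously determine everything, because the path-free entries involve only $g-1$ unknowns near one end. I would therefore add the conditions coming from $C$. Using the $\tau_2$-paths of Lemma~\ref{lem:small-order}, namely $2-3-1-4-0$ for $g=5$ and $2-5-0-4-3-1$ for $g=6$, we know which entries of $C$ at distances $1,\dots,5$ must vanish.

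Write $p_2(A)=(A-I)(A-a_1I)-b_0I$ and expand $Ap_2(A)^2$ entrywise. Imposing the vanishing entries, together with the positivity $b_i>0$, should cut the solution set down to the single Case~C point. The entry at distance $5$ is automatically nonzero, and the remaining entries give enough equations.

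\textbf{Step 3: conclude.} Once $A$ equals the Case~C matrix, $J=2t_0A/(\cdots)-I$ reproduces the Case~C recurrence with all off-diagonal entries $\tfrac12$. So $F_r(z)=T_{2r+1}(sz)/T_{2r+1}(s)$ and $\|f_r\|_D^2=\mathcal N$ for all $r$. The orthonormality argument from the proof of Theorem~\ref{thm:weights} then gives
\[
D_i\cdot\frac{2g+1}{4}=\mathcal N
\]
for every $i$. All the $D_i$ are therefore equal, and $m=(1,\ldots,1)$.

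\textbf{Main obstacle.} The main obstacle is the polynomial elimination in Step~2. The $B$-conditions leave a positive-dimensional family, especially for $g=6$, and the entries of $C$ are degree-five expressions in the $a_i$ and $\sqrt{b_i}$. My first approach is to use the linear $a$-relations to express every $a_i$ through $a_1$. Then I would use the $B_{i,i+1}=0$ equations to solve successively for $b_1,b_2,\dots$ in terms of $a_1$ and $b_0$. Finally I would substitute into one or two of the vanishing $C$ entries, for example the distance-$4$ entry $C_{0,4}$ when $g=5$. This should leave a two-variable system whose only positive solution is $a_1=2/3$, $b_0=1/9$. I would verify that last step by a resultant computation.
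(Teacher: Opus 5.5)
\textbf{There is a genuine gap: Step~2 is only asserted, and the recipe you give for it fails.}

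Your overall strategy is the paper's. You impose the vanishing of the off-path entries of $B=A(A-I)^2$ and $C=Ap_2(A)^2$. You then show that the only solution with all $b_i>0$ is $a_0=1$, $a_i=\tfrac23$, $b_i=\tfrac19$, and reuse the Case~C weight computation. Your Steps~1 and~3 are correct. But Step~2 is the essential content of the proposition, and you only say the $C$-conditions ``should'' cut the solution set down to a point. The recipe is wrong in three places.

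\begin{enumerate}
\item The $B$-relations do not express every $a_i$ through $a_1$.
\begin{itemize}
\item For $g=5$ they give $a_1=a_4=t$, $a_2+a_3=2-t$, $b_0=b_3=v$, $b_1=t-t^2-v$ and $b_2=a_2a_3+t-1$.
\item For $g=6$ they give $a_1=a_4=t$, $a_2=a_5=u$, $a_3=2-t-u$, $b_0=b_3=v$ and $b_1=b_4=w$, with $t^2-t+v+w=0$ and $b_2$ determined by $t,u$.
\end{itemize}
In both genera a three-parameter family survives. You should therefore expect to need three independent $C$-equations, not a two-variable system cut out by one or two.

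\item $C_{0,4}$ is not a vanishing entry for $g=5$. The pair $\{0,4\}$ is the last edge of the $\tau_2$-path $(2,3,1,4,0)$ that you quote. After the $B$-relations one finds $C_{0,4}=\sqrt{b_0b_1b_2b_3}\,(1-t)$. Imposing $C_{0,4}=0$ would force $t=1$, hence $b_1=-v<0$. Your recipe would then ``prove'' that no curve exists, contradicting the double $11$-gon.

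\item Your remark about the distance-$5$ entry carries no information. There is no such entry when $g=5$, and for $g=6$ the only one, $(0,5)$, lies on the path.
\end{enumerate}

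\textbf{How to close the gap.} Choose the off-path entries correctly and carry out the elimination, which is short. A useful device is the identity $p_2(A)\epsilon_0=\sqrt{b_0b_1}\,\epsilon_2$, where $\epsilon_j$ denotes the standard basis vectors. It gives $C_{0,j}=\sqrt{b_0b_1}\,(Ap_2(A))_{2,j}$, a cubic expression that the relations $B_{i,i+1}=0$ simplify further.

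\begin{itemize}
\item For $g=5$, take $C_{0,3}=C_{0,1}=C_{2,4}=0$.
\begin{itemize}
\item The first gives $v=(1-t)^2$, so $t\neq1$, and then $b_1=(1-t)(2t-1)$.
\item The second gives $(1-t)(1-a_3)=v$, so $a_3=t$.
\item The third, $(1-a_3)b_1+(a_3+3t-3)v=0$, becomes $2v(3t-2)=0$, so $t=\tfrac23$.
\end{itemize}
\item For $g=6$, the off-path entries $C_{1,5},C_{1,4},C_{0,3}$ give $u=t$, $w=v$ and $1-t-2v-w=0$. Together with $t^2-t+v+w=0$ these yield $(t-1)(t-\tfrac23)=0$. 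The root $t=1$ is excluded because it gives $v=0$.
\end{itemize}

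In both genera this forces $A$ to be the Case~C matrix, and your Step~3 then completes the proof.
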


\begin{proof}
\emph{Genus five.}
The $\tau_1$-path gives the nonzero entries of $B$:
\((1,4),(3,4),(0,3),(0,2)\). The vanishing entries, using
\eqref{eq:small-B2}--\eqref{eq:small-B1}, give
\begin{gather*}
 a_1=a_4=t,\qquad a_2+a_3=2-t,\qquad b_0=b_3=v,\\
 b_1=t-t^2-v,\qquad
 b_2=-a_3^2-a_3t+2a_3+t-1.
\end{gather*}

The $\tau_2$-path does not contain \((0,3),(0,1),(2,4)\), giving vanishing entries of
\(C\), respectively,
\begin{align*}
 0&=1-t-b_1-2v,\\
 0&=b_1\bigl((a_3-1)(t-1)-v\bigr),\\
 0&=(1-a_3)b_1+(a_3+3t-3)v.
\end{align*}
The first equation gives \(v=(1-t)^2>0\), so \(t\ne1\). The second gives \(a_3=t\) since \(b_1>0\). Thus \(b_1=(1-t)(2t-1)\), and the third equation becomes \(2v(3t-2)=0\). Hence \(t=2/3\), and

\begin{equation}\label{eq:small-rigid-A}
 a_0=1,\qquad a_i=\frac23\ (i>0),\qquad
 b_i=\frac19\ (0\le i<g-1).
\end{equation}

\emph{Genus six.}
The $\tau_1$-path gives nonzero entries at
\((1,4),(4,5),(2,5),(0,2),(0,3)\).
The vanishing entries give
\begin{gather*}
 a_1=a_4=t,\quad a_2=a_5=u,\quad a_3=2-t-u,\quad
 b_0=b_3=v,\quad b_1=b_4=w,\\
 t^2-t+v+w=0,\qquad
 tu-t+u^2-2u+b_2+1=0.
\end{gather*}
The $\tau_2$-path gives vanishing entries at \((1,5),(1,4),(0,3)\), respectively,
\[
 0=u-t,\qquad 0=w-v,\qquad 0=1-t-2v-w.
\]
Thus \(u=t\), \(w=v=(1-t)/3>0\), we have \((t-1)(t-2/3)=0\).
The root \(t=1\) would give \(v=0\), a contradiction. Therefore \(t=2/3\),
and we again have \eqref{eq:small-rigid-A}.

\emph{Computation of \(D_i\).}
Retain the notations
\[
    s=\sqrt{\frac{3}{4t_0}},
    \qquad
    v_i=su_i,
    \qquad
    J=\frac32A-I.
\]
Then \(J\) has diagonal \((1/2,0,\ldots,0)\), all its
adjacent off-diagonal entries equal \(1/2\), and
\[
    J=\Phi^{-1}\diag(2v_i^2-1)\Phi.
\]
This is precisely the Jacobi matrix appearing in Case C of the proof of Theorem~\ref{thm:weights}.
The characteristic-polynomial and recurrence computations for that matrix apply in these genera as well. They give
\[
    \{v_i:1\le i\le g\}
    =
    \left\{
      \cos\frac{(2k-1)\pi}{4g+2}:1\le k\le g
    \right\}
\]
and
\[
    \phi_r
    =\frac{(T_{2r+1}(v_i))_i}{\sqrt{\mathcal N}},
    \qquad
    \mathcal N=\sum_iD_iv_i^2.
\]
Since \(\Phi^{\mathsf T}D\Phi=I\), the square matrix
\(D^{1/2}\Phi\) also has orthonormal rows. Hence
\[
    D_i\sum_{r=0}^{g-1}T_{2r+1}(v_i)^2=\mathcal N.
\]
Writing \(v_i=\cos\theta_i\), we obtain
\[
    \sum_{r=0}^{g-1}\cos^2((2r+1)\theta_i)
    =\frac g2+
      \frac{\sin(4g\theta_i)}{4\sin(2\theta_i)}
    =\frac{2g+1}{4}.
\]
Therefore \(D_i=4\mathcal N/(2g+1)\) for every \(i\).
The relation \(D_i=m_1/m_i\) and primitivity give
\(m=(1,\ldots,1)\).
\end{proof}

The same recurrence gives
\[
    F_r(z)=\frac{T_{2r+1}(sz)}{T_{2r+1}(s)},
    \qquad 0\le r\le g-1.
\]
The field and Galois-action computations for Case C in the proof of
Theorem~\ref{thm:weights} therefore apply with
\(p=2g+1\in\{11,13\}\). They give
\[
    K=\Q(\zeta_p)^+,
    \qquad
    \operatorname{Gal}(K/\Q)
    \cong(\Z/p\Z)^\times/\{\pm1\},
    \qquad
    \tau_r\longmapsto\pm(2r+1).
\]
This completes the proof of Theorem~\ref{thm:weights}
in genera five and six.

\end{document}